\theoremstyle{plain}
\newtheorem{theorem}{Theorem}
\newtheorem{lemma}{Lemma}
\newtheorem{proposition}{Proposition}
\newtheorem{corollary}{Corollary}
\theoremstyle{definition}
\newtheorem{definition}{Definition}
\newtheorem{remark}{Remark}
\providecommand{\keywords}[1]
\title{On the dynamics of a semigroup and its relation with the Riemann Hypothesis}
\author{Carlos F. \'{A}lvarez}
\address{Departamento de Matem\'{a}ticas, Universidad del Atl\'{a}ntico, Cra 30 \# 8-49, Puerto Colombia, Colombia}
\email{cfalvarez@mail.uniatlantico.edu.co \ (Carlos F. \'Alvarez)}
\urladdr{0000-0001-5717-6531}
\author{Juan Manzur}
\email{jcmanzur@mail.uniatlantico.edu.co \ (Juan Manzur) \ Corresponding Author}
\urladdr{0000-0003-3659-6574}
\subjclass{Primary 47A16, 47B33; Secondary 46E20}\makeatletter
\date{\today}
\keywords{Devaney chaos; frequent hypercyclicity;   Hardy space; mixing operators; weighted composition operators.}
\begin{document}

\begin{abstract}

The semigroup of weighted composition operators $(W_n)_{n\in \mathbb{N}}$, defined by $$W_nf(z)=(1+z+\cdots +z^{n-1})f(z^n),$$ acts on the classical Hardy-Hilbert space $H^{2}(\mathbb{D})$, and exhibits intriguing connections with both the Riemann Hypothesis (RH) and the Invariant Subspace Problem (ISP). In this paper, we prove that the adjoint operators $W^{\ast}_{n}$, for $n\geq 2$, are Devaney chaotic, frequently hypercyclic and mixing. In particular, these operators are hypercyclic; we also discuss connections with the RH and invariant subspaces. 
\end{abstract} 

\maketitle
\markright{DYNAMICS OF A SEMIGROUP OF WEIGHTED COMPOSITION OPERATORS $\mathcal{W}$}


\section{Introduction}

Given a separable Banach space $Y$ and a linear operator $T: Y \rightarrow Y$, the \textit{orbit} of a point $f \in Y$ under $T$ is the sequence of iterates $\operatorname{Orb}(f, T) := \{T^n f : n \in \mathbb{N}\}$. A point $f$ is called \textit{periodic} if $T^n f = f$ for some $n \in \mathbb{N}$; the smallest such $n$ is its \textit{period}.
 
Three fundamental notions describe how ``chaotic'' an operator can be. An operator $T$ is \textit{hypercyclic} if some orbit is dense in $Y$, meaning the iterates of a single point eventually come arbitrarily close to every point in the space. It is \textit{Devaney chaotic} if, in addition, periodic points are dense in $Y$. It is \textit{mixing} if any two open regions of $Y$ are eventually connected by iterates of $T$. These notions are central to linear dynamics, a field at the interface of operator theory and dynamical systems; see \cite{BayartMatheron, GrossePeris} for a thorough introduction.
 
A stronger notion, \textit{frequent hypercyclicity}, was introduced by Bayart and Grivaux \cite{bayart2006frequently}. Beyond requiring a dense orbit, it demands that the orbit visits every open set with positive frequency, that is, the returns are not just eventual but statistically regular. This connects linear dynamics to ideas from ergodic theory, the branch of mathematics concerned with long-term statistical behavior of dynamical systems.
 
A natural question is whether chaos and frequent hypercyclicity are equivalent. The answer is no: Bayart and Grivaux \cite{bayart2006frequently} constructed a frequently hypercyclic operator that is not chaotic, and Menet \cite{menet2016chaotic} exhibited a chaotic operator that is not frequently hypercyclic. Although these properties can coincide in special cases \cite{BayartMatheron, DarjiPires21}, they are generally independent. Frequent hypercyclicity does, however, imply several other dynamical properties, including weak mixing and two forms of Li-Yorke chaos \cite{BayartMatheron, Bermudez2011, Bernardes2015}. The relationships among these notions are summarized in the diagram below.
 
\[\xymatrix{
   \textbf{Frequent hypercyclic} \ar@{=>}[dr]  & & 
   \textbf{Devaney chaotic} \ar@{=>}[dl]  \\
\textbf{Mixing} \ar@{=>}[r] & \textbf{Weak mixing} \ar@{=>}[r] 
& \textbf{Hypercyclic} \ar@{=>}[d] \\
 & \textbf{Li-Yorke chaotic}  & \ar@{=>}[l] 
 \textbf{Densely Li-Yorke chaotic} 
}\]
 
We focus on the semigroup of weighted composition operators $\mathcal{W} = (W_n)_{n \in \mathbb{N}}$ on the Hardy space $H^2 = H^2(\mathbb{D})$, defined by
\begin{equation}\label{WaleedSemigroup}
    W_n f(z) = \frac{1 - z^n}{1 - z} f(z^n),
\end{equation}
first introduced in \cite{Noor}, satisfying $W_m W_n = W_{mn}$ and $W_1 = I_{H^2}$. It was shown in \cite{Manzur} that each adjoint $W_n^*$ ($n \geq 2$) is universal in the sense of Rota with spectrum $\sigma(W_n^*) = B(0,\sqrt{n})$. The family $(W_n^*)_{n \geq 2}$ admits special eigenvectors, the \textit{zeta kernels} $\kappa_s \in H^2$ ($\Re s > 1/2$), satisfying $W_n^* \kappa_s = n^{1-s} \kappa_s$ \cite{Calderaro2024, Ghosh2024}. The distribution of these eigenvalues across the regions $|n^{1-s}| \gtrless 1$ and $|n^{1-s}| = 1$ is the key input for the Godefroy--Shapiro criterion. Remarkably, this single family satisfies mixing, Devaney chaos, and frequent hypercyclicity simultaneously three generally independent properties and no prior dynamical results on $\mathcal{W}$ had been reported.

The semigroup $\mathcal{W}$ is also connected to the Riemann Hypothesis (RH), which asserts that all nontrivial zeros of the Riemann zeta function lie on the critical line $\Re(s) = \tfrac{1}{2}$. Setting $h_k(z) = \frac{1}{1-z}\log\!\left(\frac{1+z+\cdots+z^{k-1}}{k}\right)$ and $\mathcal{N} = \mathrm{span}\{h_k : k \geq 2\}$, the following result due to Noor \cite{Noor} makes this precise.

\begin{theorem}[{\cite{Noor}}]\label{Noor1}
The following statements are equivalent:
\begin{enumerate}
    \item The RH is true.
    \item The constant function $1$ belongs to the closure of $\mathcal{N}$ in $H^2$.
    \item The subspace $\mathcal{N}$ is dense in $H^2$.
    \item The closure of $\mathcal{N}$ contains a cyclic vector for $\mathcal{W}$.
\end{enumerate}
\end{theorem}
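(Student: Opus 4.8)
The plan is to split the four‑way equivalence into a ``soft'' part, $(2)\Leftrightarrow(3)\Leftrightarrow(4)$, coming purely from the algebraic structure of $\mathcal W$, and the arithmetic core $(1)\Leftrightarrow(2)$, which is a Nyman--Beurling/Báez-Duarte type completeness criterion. For the soft part, the starting point is the identity
\[
W_n h_k = h_{nk}-h_n\qquad(n\in\mathbb N,\ k\ge 2),
\]
with the convention $h_1\equiv 0$. This is a one‑line check: since $h_k(w)=\frac1{1-w}\log\frac{1-w^k}{k(1-w)}$, we get $h_k(z^n)=\frac1{1-z^n}\log\frac{1-z^{nk}}{k(1-z^n)}$, the weight $\frac{1-z^n}{1-z}$ defining $W_n$ cancels the denominator, and then $\log\frac{1-z^{nk}}{k(1-z^n)}=\log\frac{1-z^{nk}}{nk(1-z)}-\log\frac{1-z^n}{n(1-z)}$ gives the claim. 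Since $h_{nk},h_n\in\mathcal N$ (and $h_1=0\in\mathcal N$), it follows that $\mathcal N$, hence $\overline{\mathcal N}$, is invariant under every $W_n$. On the other hand $W_n\mathbf 1(z)=1+z+\dots+z^{n-1}$, so the telescoping $z^{n-1}=W_n\mathbf 1-W_{n-1}\mathbf 1$ shows that $\operatorname{span}\{W_n\mathbf 1:n\ge1\}$ is exactly the space of polynomials, dense in $H^2$; in particular $\mathbf 1$ is a cyclic vector for $\mathcal W$. Now $(3)\Rightarrow(4)$ is immediate, as $\mathbf 1\in H^2=\overline{\mathcal N}$ is cyclic; $(3)\Rightarrow(2)$ is trivial; and both converses use invariance of $\overline{\mathcal N}$: if $\mathbf 1\in\overline{\mathcal N}$ then $\overline{\mathcal N}\supseteq\overline{\operatorname{span}}\{W_n\mathbf 1:n\ge1\}=H^2$, giving $(2)\Rightarrow(3)$, while if $\overline{\mathcal N}$ contains any cyclic vector $f$ then $H^2=\overline{\operatorname{span}}\{W_nf:n\ge1\}\subseteq\overline{\mathcal N}$, giving $(4)\Rightarrow(3)$.

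For $(1)\Leftrightarrow(2)$ I would first make the system explicit on the Taylor side. Writing $H_N=\sum_{m=1}^N\frac1m$, a short computation from the power series of $\log(1-z^k)$, $\log(1-z)$ and $\frac1{1-z}$ gives
\[
\widehat{h_k}(N)=H_N-H_{\lfloor N/k\rfloor}-\log k=\frac1N\Bigl((N\bmod k)-\tfrac{k-1}{2}\Bigr)+O\!\Bigl(\tfrac{k^2}{N^2}\Bigr)\qquad(N\to\infty).
\]
Under the identification $f\leftrightarrow(\widehat f(N))_{N\ge0}$ of $H^2$ with $\ell^2$, the family $\{h_k\}_{k\ge2}$ therefore agrees, up to controlled perturbations and an overall normalisation, with the Báez-Duarte system of dilated sawtooth sequences $N\mapsto\tfrac1N(\{N/k\}-\tfrac12)$, whose completeness is classically governed by the zeros of $\zeta$. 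Concretely, I would attach to each $g\in\mathcal N^{\perp}=\{g\in H^2:\langle g,h_k\rangle=0\ \forall k\ge2\}$ a Dirichlet series $D_g$, holomorphic for $\Re s>\tfrac12$, and use the summation‑by‑parts identities behind the Hurwitz‑zeta expansion of $\sum_N(N\bmod k)\,N^{-s}$ to show that the relations $\langle g,h_k\rangle=0$ for all $k$ force $D_g$ to be divisible by $\zeta(s)$, i.e.\ to vanish at every zero of $\zeta$ in $\Re s>\tfrac12$. Running this through the Hilbert‑space duality $\mathbf 1\in\overline{\mathcal N}\iff\mathcal N^{\perp}\subseteq\{\mathbf 1\}^{\perp}$ then yields that $\mathbf 1\in\overline{\mathcal N}$ precisely when $\zeta$ is zero‑free in $\Re s>\tfrac12$, which is the Riemann Hypothesis.

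The hard part is exactly this last step, the passage between the geometry of $\overline{\mathcal N}\subseteq H^2$ and the zero distribution of $\zeta$. The implication ``a zero of $\zeta$ with real part $>\tfrac12$ obstructs $\mathbf 1\in\overline{\mathcal N}$'' is comparatively direct once the Dirichlet‑series dictionary is set up; the reverse implication, RH $\Rightarrow\mathbf 1\in\overline{\mathcal N}$, is the genuinely difficult half, because the ambient space is of $L^2$ type and the naive Wiener--Tauberian equivalence ``completeness $\iff$ non‑vanishing of the symbol'' breaks down on the critical line $\Re s=\tfrac12$. To close it one must import the quantitative analytic input underlying the Báez-Duarte/Balazard--Saias refinement of the Nyman--Beurling criterion — the functional equation and analytic continuation of $\zeta$ together with mean‑value and approximate‑functional‑equation estimates. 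In practice I would either cite the Nyman--Beurling--Báez-Duarte theorem and reduce the proof to verifying that the correspondence above is a bounded isomorphism onto the relevant model space, or transcribe its proof into the Hardy‑space setting; by comparison the soft equivalences of the first paragraph are routine.
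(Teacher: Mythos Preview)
The paper does not prove this theorem at all; it is stated in the introduction as a result quoted from \cite{Noor}, with no argument supplied, and later only the special case $(1)\Leftrightarrow(3)$ is restated (again without proof) as ``Theorem (See \cite[Theorem 8]{Noor})''. There is therefore no proof in the present paper to compare your proposal against.

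On its own merits, your outline is correct and is essentially the route taken in \cite{Noor}. The identity $W_n h_k=h_{nk}-h_n$ (with $h_1=0$) is exactly the mechanism that makes $\mathcal N$ and $\overline{\mathcal N}$ invariant under the semigroup, and the telescoping $z^{n-1}=W_n\mathbf 1-W_{n-1}\mathbf 1$ is the standard way to see that $\mathbf 1$ is cyclic for $(W_n)$; your deductions of $(2)\Leftrightarrow(3)\Leftrightarrow(4)$ from these two facts are clean and complete. For $(1)\Leftrightarrow(2)$ your diagnosis is also right: this is precisely a Hardy-space transcription of the B\'aez-Duarte strengthening of the Nyman--Beurling criterion, and \cite{Noor} handles it by building an explicit unitary from $H^2$ onto the appropriate weighted $\ell^2$ model so that the $h_k$ map to the B\'aez-Duarte generators, then invoking B\'aez-Duarte's theorem rather than reproving it. Your frank admission that the direction RH $\Rightarrow$ $\mathbf 1\in\overline{\mathcal N}$ is the deep half, requiring the full analytic input (functional equation, mean-value estimates) behind B\'aez-Duarte's argument, is accurate; a self-contained write-up would indeed have to either import that theorem as a black box or reproduce its proof.
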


Equivalently, the RH holds if and only if $\mathcal{N}^{\perp} = \{0\}$, motivating the study of subspaces $X \subseteq H^2$ for which
\begin{align}\label{c1}
\mathcal{N}^{\perp} \cap X = \{0\}.
\end{align}

The main goal of this paper is to prove that $W_n^*$ is mixing, Devaney chaotic, and frequently hypercyclic for each $n \geq 2$, using the zeta kernels $\{\kappa_s\}$ together with the Godefroy--Shapiro criterion and the eigenvector criterion of Bayart and Matheron, and to explore the implications for the RH and invariant subspaces. Section~\ref{Preli} reviews background results. Section~\ref{Mixingchaos} establishes mixing and Devaney chaos. Section~\ref{Frequenthyp} proves frequent hypercyclicity. Section~\ref{RHIS} discusses the RH and the existence of a dense subspace of hypercyclic vectors for each $W_n^*$ satisfying~(\ref{c1}).

\section{Definition and background results}\label{Preli}
In this section, we establish the notation and recall preliminary concepts concerning the Hardy-Hilbert space $H^2$ and the theory of linear dynamics. We denote by $\sigma(T)$ the \emph{spectrum} of a linear operator $T$, and by $\sigma_p(T)$ its \emph{point spectrum}. Given $r > 0$, we write $\overline{B}(a, r)$ the closed ball of radius \( r \) centered at \( a \in \mathbb{C} \).

\subsection{The weighted composition semigroup  \texorpdfstring{$\mathcal{W}$}{lg}}

Let $\mathbb{D} = \{ z \in \mathbb{C} : |z| < 1 \}$ and  $\mathbb{T} = \{ z \in \mathbb{C} : |z| = 1 \}$ denote the open  unit disk and unit circle in the complex plane, respectively. The Hardy space \( H^2 := H^2(\mathbb{D}) \) is the space of all analytic functions in \( \mathbb{D} \) that admit a power series representation of the form  $f(z) =\sum_{n=0}^{\infty} \hat{f}(n) z^n$, for which the norm $\|f\|^2_{2} := \sum_{n=0}^{\infty} |\hat{f}(n)|^2 < \infty.$ This space is a \textit{Hilbert space} with inner product defined by:
\[
\langle f, g \rangle = \sum_{n=0}^{\infty} \hat{f}(n)\overline{\hat{g}(n)}.
\]
where $f,g\in H^2$. Related to this, the space $H^\infty$ consists of bounded holomorphic functions on $\mathbb{D}$:
\[ H^\infty := \left\{ f \colon \mathbb{D} \to \mathbb{C}:\ f \text{ holomorphic and } \|f\|_{\infty} :=\sup_{z \in \mathbb{D}} |f(z)| < \infty \right\}.\]

\begin{definition}
Given an analytic self-map \( \varphi: \mathbb{D} \to \mathbb{D} \) and an analytic function \( w: \mathbb{D} \to \mathbb{C} \), the \emph{weighted composition operator} \( C_{w,\varphi} \) is defined on \( H^2 \) by
\[C_{w,\varphi}f(z) = w(z) \cdot f(\varphi(z)), \quad f \in H^2, \, z \in \mathbb{D}.\]
\end{definition}

\begin{remark}
If the weight function \( w \) belongs to \( H^\infty \), then \( C_{w,\varphi} \) is a bounded operator on \( H^2 \), see \cite{Matache}.
\end{remark}

We are going to focus this work on the study of the semigroup of weighted composition operators 
\(\mathcal{W} = (W_n)_{n \in \mathbb{N}}\) on \(H^2\) defined in \eqref{WaleedSemigroup}.

We first establish an adjoint formula for the elements of the semigroup \(\mathcal{W}\) which will be used throughout the rest of this work.

\begin{lemma}[See {\cite[Lemma~3]{Manzur}}]\label{adjointformula}
For any \(f \in H^2\) and \(n \in \mathbb{N}\), the adjoint of \(W_n\) is given by
\[W_n^* f(z) = \sum_{k=0}^{\infty} B_n(k) z^k, \]
where \((\hat{f}(k))_{k \in \mathbb{N}}\) are the Maclaurin coefficients of \(f\) and
\[ B_n(k) = \hat{f}(nk) + \hat{f}(nk + 1) + \cdots + \hat{f}(nk + n - 1)\]
is the sum of the \(k\)-th block of \(n\) consecutive coefficients.
\end{lemma}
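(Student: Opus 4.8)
The plan is to compute everything on the orthonormal basis $\{z^k\}_{k\ge 0}$ of $H^2$ and then extend by linearity and continuity. First I would note that $W_n$ is bounded, since its weight $1+z+\cdots+z^{n-1}$ lies in $H^\infty$ (by the Remark above), so that $W_n^*\in\mathcal L(H^2)$ is well defined. Applying $W_n$ to a monomial gives
\[W_n z^j=(1+z+\cdots+z^{n-1})z^{nj}=\sum_{i=0}^{n-1}z^{nj+i},\]
so, since the blocks $\{nj,nj+1,\dots,nj+n-1\}$ for $j\ge 0$ partition $\mathbb N$, the $m$-th Maclaurin coefficient of $W_nf$ equals $f(\lfloor m/n\rfloor)$ for every $f=\sum_j f(j)z^j\in H^2$.

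From the monomial identity one reads off $\langle W_n z^j,z^m\rangle=1$ when $nj\le m\le nj+n-1$ (equivalently $j=\lfloor m/n\rfloor$) and $0$ otherwise. Since these numbers are real, taking adjoints yields $W_n^*z^m=z^{\lfloor m/n\rfloor}$, and hence for $f=\sum_m f(m)z^m$,
\[W_n^*f=\sum_{m\ge 0}f(m)\,z^{\lfloor m/n\rfloor}=\sum_{k\ge 0}\Big(\sum_{m:\ \lfloor m/n\rfloor=k}f(m)\Big)z^k=\sum_{k\ge 0}B_n(k)\,z^k,\]
which is the claimed formula once the inner sum is written as $f(nk)+f(nk+1)+\cdots+f(nk+n-1)$. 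To avoid taking the adjoint of an infinite sum, I would alternatively define $Sf:=\sum_k B_n(k)z^k$ directly, check that $S$ is a bounded operator on $H^2$, and verify $\langle W_nf,g\rangle=\langle f,Sg\rangle$ for all $f,g\in H^2$; uniqueness of the adjoint then gives $S=W_n^*$. Boundedness of $S$ follows from Cauchy--Schwarz applied blockwise, $|B_n(k)|^2\le n\sum_{i=0}^{n-1}|f(nk+i)|^2$, whence $\|Sf\|^2\le n\|f\|^2$; and the identity $\langle W_nf,g\rangle=\langle f,Sg\rangle$ is obtained by expanding both sides against $\{z^m\}$: the left-hand side equals $\sum_m f(\lfloor m/n\rfloor)\overline{g(m)}$ by the coefficient computation, the right-hand side equals $\sum_k\sum_{i=0}^{n-1}f(k)\overline{g(nk+i)}$, and the substitution $m=nk+i$ matches the two.

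Since the argument is essentially bookkeeping, I do not expect a genuine obstacle. The only points requiring care are the rearrangement of the absolutely convergent series indexed by the partition $\mathbb N=\bigsqcup_{k\ge 0}\{nk,\dots,nk+n-1\}$ and the verification that the candidate operator actually sends $H^2$ into $H^2$ --- both handled by the blockwise Cauchy--Schwarz bound --- together with keeping the indexing straight, so that the ``$k$-th block of $n$ consecutive coefficients'' of $f$ becomes exactly the coefficient of $z^k$ in the output.
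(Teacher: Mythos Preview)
Your argument is correct: computing $W_n$ on the monomial basis, reading off $W_n^*z^m=z^{\lfloor m/n\rfloor}$, and then either summing or verifying $\langle W_nf,g\rangle=\langle f,Sg\rangle$ with the blockwise Cauchy--Schwarz bound is exactly the right approach, and all the bookkeeping is handled cleanly. The paper does not give its own proof of this lemma---it simply cites \cite[Lemma~3]{Manzur}---so there is nothing to compare against here; your write-up would serve as a self-contained proof of the quoted result.
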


As a consequence of Lemma \ref{adjointformula}, it follows that this semigroup satisfy the property that
\begin{equation*}\label{Wident}
W_n^* W_n = n I_{H^2}
\end{equation*}
for all \( n \in \mathbb{N} \), and therefore that \( (W_n / \sqrt{n})_{n \in \mathbb{N}} \) is a semigroup of \emph{isometries}. Moreover, \( (W_n / \sqrt{n})_{n \in \mathbb{N}} \) defines a \textit{shift semigroup}. Recall that an operator $S\in \mathcal{L}(Y)$ is a shift operator if \( \|S^{*n} f\| \to 0 \) for all \( f \in Y \). As a consequence of \cite[Proposition~4]{Manzur}, every \(W_n^*\) is universal in the sense of Rota and we have the following consequence. 

\begin{theorem}[{See \cite[Corollary 5]{Manzur}}]\label{EspMan} $\sigma(W_n^*) = \sigma(W_n) = \overline{B}(0, \sqrt{n}) \quad \text{for all } n \geq 2.$ Also, each \( \sigma_p(W_n) = \emptyset \), and hence \( W_n \) has no finite dimensional invariant subspaces.
\end{theorem}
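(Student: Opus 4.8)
The plan is to identify $S:=W_n/\sqrt n$ as a unitary copy of the unilateral forward shift of infinite multiplicity, and then read off both spectra and the point spectrum from the textbook theory of shifts.

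First I would dispose of the easy inclusion. From \eqref{Wident} one has $\|W_n f\|^2=\langle W_n^*W_nf,f\rangle=n\|f\|^2$, so $\|W_n\|=\sqrt n$; since $W_n^k=W_{n^k}$, the spectral radius formula gives $r(W_n)=\lim_k\|W_{n^k}\|^{1/k}=\lim_k (n^k)^{1/(2k)}=\sqrt n$, whence $\sigma(W_n)\subseteq\overline{B}(0,\sqrt n)$. Because $\sigma(W_n^*)=\{\overline\lambda:\lambda\in\sigma(W_n)\}$ and $\overline{B}(0,\sqrt n)$ is symmetric under conjugation, the same inclusion holds for $W_n^*$. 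So it remains to prove the reverse inclusion $\overline{B}(0,\sqrt n)\subseteq\sigma(W_n)$ together with $\sigma_p(W_n)=\emptyset$.

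For this, I would use that $S=W_n/\sqrt n$ is an isometry which moreover satisfies $\|S^{*k}f\|\to 0$ for every $f\in H^2$ — this is the \emph{shift semigroup} property recorded after \eqref{Wident}, and it follows directly from \autoref{adjointformula}: if $n^k>\deg p$ then $W_{n^k}^*p=p(1)\cdot 1$, so $\|S^{*k}p\|=|p(1)|\,n^{-k/2}\to 0$, and since $\|S^{*k}\|\le 1$ and polynomials are dense this extends to all of $H^2$. Thus $S$ is a pure (completely non-unitary) isometry, and by the Wold decomposition it is unitarily equivalent to the unilateral forward shift of multiplicity $m=\dim\ker S^*=\dim\ker W_n^*$. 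By \autoref{adjointformula}, $\ker W_n^*$ is the set of $f$ whose every block sum $\sum_{j=0}^{n-1}\hat f(nk+j)$ vanishes, and for $n\ge 2$ this is infinite dimensional (each block of $n$ consecutive coefficients carries $n-1\ge 1$ degrees of freedom), so $m=\infty$. The unilateral shift of any multiplicity $\ge 1$ has spectrum $\overline{\mathbb D}$ and empty point spectrum; hence $\sigma(S)=\overline{\mathbb D}$ and $\sigma_p(S)=\emptyset$, and scaling by $\sqrt n$ gives $\sigma(W_n)=\overline{B}(0,\sqrt n)$, $\sigma_p(W_n)=\emptyset$, and then $\sigma(W_n^*)=\overline{B}(0,\sqrt n)$ by conjugation symmetry. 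Finally, as $W_n$ then has no eigenvectors and every operator on a nonzero finite-dimensional complex space has an eigenvalue, $W_n$ has no finite-dimensional invariant subspace other than $\{0\}$.

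I expect the only genuine content to be the middle step — verifying that $S$ is a \emph{pure} isometry of infinite multiplicity — after which everything is bookkeeping plus standard spectral theory of shifts; note the hypothesis $n\ge 2$ enters exactly here, since for $n=1$ the operator $W_1=I$ is unitary and the statement fails. If one prefers to avoid the Wold decomposition, the reverse inclusion $\overline{B}(0,\sqrt n)\subseteq\sigma(W_n)$ can instead be obtained by exhibiting, for each $\mu$ with $|\mu|<\sqrt n$, an eigenvector of $W_n^*$ with eigenvalue $\mu$: using $W_n^*k_w=\frac{1-\overline w^{\,n}}{1-\overline w}\,k_{w^n}$ for the reproducing kernels $k_w$ of $H^2$, one assembles such an eigenvector as an $H^2$-convergent series $\sum_{m\ge 0}a_mk_{w_m}$ along a sequence with $w_m^{\,n}=w_{m-1}$, the delicate point being to let $a_m$ decay fast enough to beat the growth of $\|k_{w_m}\|$ as $|w_m|\to 1$.
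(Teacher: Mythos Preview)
Your argument is correct. Note, however, that the paper does not itself prove this statement: it is quoted from \cite[Corollary~5]{Manzur}, where it is derived after establishing (via the Caradus criterion) that each $W_n^*$ is universal in the sense of Rota. Your route through the Wold decomposition is genuinely different and more self-contained for the purpose at hand: once one knows that $W_n/\sqrt{n}$ is an isometry with $\|(W_n^*/\sqrt{n})^k f\|\to 0$ --- both facts the present paper already records right after \eqref{Wident} --- the identification with an infinite-multiplicity unilateral shift is immediate, and the spectrum and empty point spectrum then come straight from textbook shift theory. The cited approach, by contrast, first proves universality and obtains the spectral picture as a corollary; that buys the stronger structural statement (used elsewhere in the paper), but is heavier if the spectrum is all one wants. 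Your closing alternative via reproducing kernels is also viable, though as you note the convergence of $\sum a_m k_{w_m}$ needs care since $\|k_{w_m}\|\to\infty$.
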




\subsection{Notions of linear dynamics}

Let $\mathcal{L}(Y)$ be the space of all linear operators $T: Y \rightarrow Y$. The notions of hypercyclicity, Devaney chaos, and mixing have been previously defined. We begin this section with a precise definition of frequent hypercyclicity.

\begin{definition}
Let $A$ be a subset of natural numbers. The \textit{lower density} of \( A \) is defined by
\[\underline{\mathrm{dens}}(A) := \liminf_{N \to \infty} \frac{\mathrm{card}(A \cap [1, N])}{N}.\]
\end{definition}
Let us define 
\[\mathrm{N}(x, V) := \{ n \in \mathbb{N} \, ; \, T^n(x) \in V \}.\]

\begin{definition}
Let \( T \in \mathcal{L}(Y) \). The operator \( T \) is said to be \textit{frequently hypercyclic} if there exists some vector \( y \in Y \) such that \( N(y, V) \) has positive lower density for every non-empty open set \( V \subset Y \). Such a vector \( x \) is said to be frequently hypercyclic for \( T \), and the set of all frequently hypercyclic vectors for \( T \) is denoted by \( \mathrm{FHC}(T) \).
\end{definition}

As consequence of \cite[Theorem~9.8]{GrossePeris} every frequently hypercyclic operator is weakly mixing. 

\begin{definition}
An operator $T\in\mathcal{L}(Y)$ is \textit{Li-Yorke chaotic} if it has an uncountable \textit{scrambled set} $S$, i.e., for all $x, y \in S$ with $x\neq y$, we have that $$\displaystyle\liminf_{n\to \infty}\|T^{n}x-T^{n}y\|=0\ and 
 \ \displaystyle\limsup_{n\to \infty}\|T^{n}x-T^{n}y\|=\infty.$$
If the set \( S \) can be chosen to be dense in \( Y \), then \( T \) is \textit{densely Li-Yorke chaotic}.  \end{definition}

It is well known that every hypercyclic operator is Li-Yorke chaotic (see \cite[Remark~22]{Bernardes2015}). In particular, every frequently hypercyclic operator is Li-Yorke chaotic. 



 \begin{proposition}[{See \cite[Proposition 11]{Bernardes2015}}]
If \( T \in \mathcal{L}(Y) \) and \( T^* \) has an eigenvalue \( \lambda \) with \( |\lambda| \geq 1 \), then \( T \) is not densely Li-Yorke chaotic.
\end{proposition}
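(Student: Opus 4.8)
The plan is to use a nonzero eigenvector of $T^{*}$ as a continuous linear functional that cannot decay along differences of orbits, and then play this against the defining property of a scrambled pair. Since $\lambda \in \sigma_p(T^{*})$, fix $\phi \in Y^{*}\setminus\{0\}$ with $T^{*}\phi = \lambda\phi$. Unwinding the definition of the adjoint, this says exactly that $\phi(Tx) = \lambda\,\phi(x)$ for all $x \in Y$, and iterating gives $\phi(T^{n}x) = \lambda^{n}\phi(x)$ for every $n \in \mathbb{N}$ and every $x \in Y$.

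I would then argue by contradiction. Suppose $T$ is densely Li-Yorke chaotic, with a dense (uncountable) scrambled set $S$. Fix distinct $x,y \in S$. Applying the above identity to $x-y$ and using $|\lambda|\geq 1$ gives
\[
|\phi(T^{n}x - T^{n}y)| = |\lambda|^{n}\,|\phi(x-y)| \;\geq\; |\phi(x-y)| \qquad\text{for all } n .
\]
On the other hand, $\liminf_{n}\|T^{n}x - T^{n}y\| = 0$ together with the boundedness of $\phi$ forces $\liminf_{n}|\phi(T^{n}x - T^{n}y)| = 0$. Comparing the two displays yields $\phi(x-y) = 0$, i.e. $\phi(x) = \phi(y)$.

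Since $x,y \in S$ were arbitrary, $\phi$ is constant on $S$; as $S$ is dense in $Y$ and $\phi$ is continuous, $\phi$ is constant on all of $Y$, and a constant linear functional must be the zero functional (evaluate at the origin). This contradicts $\phi \neq 0$, so no dense scrambled set exists and $T$ is not densely Li-Yorke chaotic.

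I do not expect a genuine obstacle here — the argument is short — but two points deserve care. First, only the $\liminf$ half of the scrambled-pair condition is used; the $\limsup = \infty$ requirement plays no role. Second, the closing step genuinely needs both the density of $S$ (to pass from "constant on $S$" to "constant on $Y$") and the linearity of $\phi$ (to collapse a constant functional to $0$). The hypothesis $|\lambda|\geq 1$ enters precisely through the inequality $|\lambda|^{n}\geq 1$: for $|\lambda|>1$ one would even contradict boundedness of $\{T^{n}x - T^{n}y\}_{n}$, while the borderline case $|\lambda|=1$ uses it at full strength.
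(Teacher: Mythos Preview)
Your argument is correct: the eigenvector $\phi$ of $T^{*}$ gives a nonzero functional with $|\phi(T^{n}x-T^{n}y)|=|\lambda|^{n}|\phi(x-y)|\geq |\phi(x-y)|$, the proximal condition forces $\phi$ to be constant on the scrambled set $S$, and density of $S$ together with linearity collapses $\phi$ to zero, a contradiction. Each step is sound, and your remarks about which hypotheses are actually used are accurate.

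There is nothing to compare against, however: the paper does not prove this proposition but merely quotes it from \cite[Proposition~11]{Bernardes2015} as a background tool, applying it immediately afterward to conclude that $W_k$ is not densely Li--Yorke chaotic (since $W_k^{*}1=1$). Your proof is essentially the standard one for this fact and would serve perfectly well as a self-contained justification.
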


As a consequence of the Proposition above we can see that $W_k$, for each $k\geq1$, \textit{is not densely Li-Yorke chaotic} since $W_k^*1=1$, $\forall k\geq1$.  The natural question that arises is the following: Is each operator $W_k$, $k\geq2$, Li–Yorke chaotic? Note first that the three necessary conditions stated in \cite[Corollary 6]{Bermudez2011} are satisfied:
\begin{enumerate}
    \item By Theorem \ref{EspMan}, \( \sigma(W_k) \cap \mathbb{T} \neq \emptyset \).
    \item $W_k$ is not normal since 
    \begin{align*}
        W_kW_k^*1=1+z+\dots+z^{k-1}\neq k=W_k^*W_k1,\ \ \forall k\geq2.
    \end{align*}
    \item  $W_k$ is not a compact operator since it has no eigenvalues and $\sigma(W_k)$ is non-empty (see \cite[Theorem 8.4-4]{Kreyszig}).
\end{enumerate}

These observations suggest that each operator $W_k$, $k\geq2$, is Li–Yorke chaotic. However, in the next section we shall demonstrate that this assertion is false.

\section{A semigroup of mixing and Devaney chaotic operators}\label{Mixingchaos}

In this section we shall prove that the elements of the semigroup $(W_k^*)_{k\geq2}$ are mixing and Devaney chaotic operators. In order to do that, we are going to use the following result:

\begin{theorem}[Godefroy--Shapiro Criterion {\cite[Theorem 3.1]{GrossePeris}}] \label{GSC}
Let \( T \in \mathcal{L}(Y) \). Suppose that
\[\bigcup_{|\lambda| < 1} \ker(T - \lambda I) \quad \text{and} \quad \bigcup_{|\lambda| > 1} \ker(T - \lambda I) \]
both span a dense subspace of  \( Y\). Then \( T \) is mixing, and in particular hypercyclic.\vspace{2mm}

If, moreover, \( Y \) is a complex space and also the subspace
\[Z_0 := \mathrm{span} \{ x \in Y \; ; \; Tx = e^{\alpha \pi i} x \text{ for some } \alpha \in \mathbb{Q} \}\]
is dense in \( Y \), then \( T \) is Devaney chaotic.
\end{theorem}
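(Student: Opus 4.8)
The plan is to derive this from the hands-on perturbation argument underlying the classical Hypercyclicity (Kitai) Criterion. Set $X_{-} := \mathrm{span}\bigcup_{|\lambda|<1}\ker(T-\lambda I)$ and $X_{+} := \mathrm{span}\bigcup_{|\lambda|>1}\ker(T-\lambda I)$, both dense in $Y$ by hypothesis. First I would record that $T^n x \to 0$ for every $x \in X_{-}$: writing $x = \sum_i c_i x_i$ as a finite combination with $Tx_i = \lambda_i x_i$, $|\lambda_i| < 1$, gives $T^n x = \sum_i c_i \lambda_i^{\,n} x_i \to 0$. Dually, on $X_{+}$ define a linear map $S$ by $Sy := \mu^{-1} y$ whenever $Ty = \mu y$ with $|\mu| > 1$, extended linearly; this is well defined because the eigenspaces attached to pairwise distinct eigenvalues are linearly independent, so their sum is direct. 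Then $S$ maps $X_{+}$ into itself, $TS = \mathrm{id}$ on $X_{+}$, and $S^n y = \sum_j d_j \mu_j^{-n} y_j \to 0$ for every $y = \sum_j d_j y_j \in X_{+}$.

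Next I would prove mixing directly. Given nonempty open sets $U, V \subseteq Y$, choose $u \in U \cap X_{-}$ and $v \in V \cap X_{+}$ by density and put $z_n := u + S^n v$. Since $S^n v \to 0$ we have $z_n \to u$, hence $z_n \in U$ for all large $n$; and since $T^n S^n = \mathrm{id}$ on $X_{+}$ we get $T^n z_n = T^n u + v \to v$, hence $T^n z_n \in V$ for all large $n$. Thus there is $N$ with $T^n(U) \cap V \neq \emptyset$ for all $n \geq N$, so $T$ is mixing, and in particular hypercyclic (a mixing operator on a separable completely metrizable space is topologically transitive, hence hypercyclic by the Birkhoff transitivity theorem).

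For the second assertion, assume $Y$ is complex and $Z_0$ is dense. The key point is that every vector of $Z_0$ is periodic: if $Tx = e^{i\alpha\pi}x$ with $\alpha = p/q \in \mathbb{Q}$ then $T^{2q}x = e^{2\pi i p}x = x$, and a finite sum $x = \sum_k x_k$ of such eigenvectors with $\alpha_k = p_k/q_k$ satisfies $T^N x = x$ for $N = 2\,\mathrm{lcm}_k(q_k)$. Hence $T$ has a dense set of periodic points; together with the hypercyclicity already established, this is precisely Devaney chaos.

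The argument is essentially bookkeeping with finite sums, so there is no serious obstacle; the one place that deserves a line of justification is the consistency of the definition of the right inverse $S$ on $X_{+}$ (linear independence of eigenvectors across distinct eigenvalues), and — if one prefers to invoke the Hypercyclicity Criterion rather than run the perturbation by hand — the verification that the triple $(X_{-}, X_{+}, S)$ satisfies its hypotheses, which is exactly what the facts $T^n \to 0$ on $X_{-}$, $S^n \to 0$ on $X_{+}$, and $TS = \mathrm{id}$ on $X_{+}$ supply.
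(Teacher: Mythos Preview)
The paper does not give its own proof of this theorem; it is quoted from the literature (Grosse-Erdmann and Peris, \emph{Linear Chaos}, Theorem~3.1) and used as a black box. Your argument is correct and is essentially the standard textbook proof: build the dense subspaces $X_{-}$ and $X_{+}$ from the eigenvector hypotheses, observe that $T^n\to 0$ pointwise on $X_{-}$, construct the formal right inverse $S$ on $X_{+}$ via the eigenvalue decomposition, and run the Kitai-type perturbation $z_n=u+S^n v$ to obtain mixing directly. The well-definedness of $S$ (linear independence of eigenspaces for distinct eigenvalues) and the periodicity of vectors in $Z_0$ are handled correctly. There is nothing to compare against in the paper itself.
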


\begin{remark}\label{PerChar}
Let us denote \(\mathrm{Per}(T) \) the set of periodic points of $T$. In  {\cite[Proposition 2.33]{GrossePeris}} it is shown that \[ \mathrm{Per}(T)= \mathrm{span} \{ x \in X \; ; \; Tx = e^{\alpha \pi i} x \text{ for some } \alpha \in \mathbb{Q} \}.\]
\end{remark}

\subsection{A mixing semigroup}   

We will explore now the eigenvalues of the semigroup \( (W_n^{\ast})_{n \in \mathbb{N}} \).  Let \( H^2(\mathbb{C}_{1/2}) \) be the \textit{Hardy space of the open right half-plane} \( \mathbb{C}_{1/2} = \{ \Re s > 1/2 \} \). This is a Hilbert space of analytic functions on \( \mathbb{C}_{1/2} \) for which the norm is given by
\[ \|f\|_2^2 = \sup_{0 < x < \infty} \frac{1}{\pi} \int_{-\infty}^{\infty} |f(x + iy)|^2 \, dy. \]

In \cite{Ghosh2024} it was introduced a linear functional \( \Lambda^{(s)}:= E_s \circ \Lambda : H^2\to \mathbb{C} \), where $E_s: H^2(\mathbb{C}_{1/2}) \to \mathbb{C}$ is the evaluation functional at \( s \in \mathbb{C} \setminus \{0\} \) and $\Lambda: H^2 \to H^2(\mathbb{C}_{1/2})$ is the result of a composition of certain isometries. \( \Lambda^{(s)}\)  can be defined by assigning
\[
\Lambda^{(s)}(1) = -\frac{1}{s}, \quad \Lambda^{(s)}(z^k) = -\frac{1}{s} \left( (k+1)^{1-s} - k^{1-s} \right) \quad (k \in \mathbb{N}).
\]

In particular \( \Lambda^{(s)} \) is bounded on \( H^2 \)  if \( \Re s > 1/2 \). So there exist functions \( \kappa_s \in H^2 \) such that \( \Lambda^{(s)}(f) = \langle f, \kappa_s \rangle \). The function \( \kappa_s \) is called the zeta kernel at \( s \) and

\begin{equation*}
\kappa_s(z) = \sum_{k=0}^{\infty} \varphi_k(\bar{s}) z^k \quad \text{where} \quad \varphi_k(s) := \Lambda^{(s)}(z^k).
\end{equation*}

In  \cite{Calderaro2024} it was shown that the set $\mathcal{K}=\{ \kappa_s: \Re s > 1/2 \}$ is a family of common eigenvectors for $(W_k^*)_{k\in \mathbb{N}}$ with  \[W_n^* \kappa_s = n^{1 - \overline{s}} \kappa_s, \quad \forall\ n\in \mathbb{N}.\]
In particular, $|n^{1 - \overline{s}}|=n^{1 -\Re s} >1$ for $1/2<\Re s < 1$ and  $|n^{1 - \overline{s}}|<1$ for $\Re s > 1$.

\begin{lemma}\label{ktotal}
$\{ \kappa_s:  1/2<\Re s < 1 \}$ span a dense subspace in $H^2$.
\end{lemma}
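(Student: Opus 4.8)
The plan is to show that the orthogonal complement of $\mathcal{K}_0 := \{\kappa_s : 1/2 < \Re s < 1\}$ in $H^2$ is trivial. So I would fix $g \in H^2$ with $\langle g, \kappa_s\rangle = 0$ for every $s$ in the strip $1/2 < \Re s < 1$ and try to conclude that $g = 0$. The natural object to track is the function $G(s) := \langle g, \kappa_s \rangle = \Lambda^{(s)}(g)$. Since $\Lambda^{(s)}$ is linear and bounded on $H^2$ for $\Re s > 1/2$ and $\Lambda^{(s)}(z^k) = \varphi_k(s)$, one has $G(s) = \sum_{k\ge 0}\hat g(k)\varphi_k(s)$, with $\varphi_0(s) = -1/s$ and $\varphi_k(s) = -\tfrac1s\big((k+1)^{1-s} - k^{1-s}\big)$ for $k \ge 1$. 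The first step is to check that $G$ is holomorphic on the half-plane $\{\Re s > 1/2\}$: writing $(k+1)^{1-s} - k^{1-s} = (1-s)\int_k^{k+1} t^{-s}\,dt$ yields $|\varphi_k(s)| \le \tfrac{|1-s|}{|s|}\,k^{-\Re s}$ for $k\ge 1$, so by Cauchy--Schwarz the series $\sum_k \hat g(k)\varphi_k(s)$ converges locally uniformly on $\{\Re s > 1/2\}$ (where $2\Re s > 1$) and hence defines a holomorphic function there.

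The second step is analytic continuation. By hypothesis $G$ vanishes on the open strip $1/2 < \Re s < 1$, a non-empty open subset of the connected domain $\{\Re s > 1/2\}$, so by the identity theorem $G \equiv 0$ on all of $\{\Re s > 1/2\}$. Equivalently $E_s(\Lambda g) = \Lambda^{(s)}(g) = 0$ for every $s \in \mathbb{C}_{1/2}$, and since evaluation functionals separate points of $H^2(\mathbb{C}_{1/2})$ this forces $\Lambda g = 0$; as $\Lambda$ is a composition of isometries (see \cite{Ghosh2024}) it is injective, hence $g = 0$. Alternatively, one can finish without invoking $\Lambda$: for $\Re s > 3/2$ the convergence is absolute and $-sG(s) = \hat g(0) + \sum_{k\ge 1}\hat g(k)\big((k+1)^{1-s} - k^{1-s}\big)$ rearranges to the Dirichlet series $(\hat g(0) - \hat g(1)) + \sum_{m\ge 2}(\hat g(m-1) - \hat g(m))\,m^{-(s-1)}$; its identical vanishing forces all coefficients to be $0$ by uniqueness of Dirichlet coefficients, so all Maclaurin coefficients of $g$ coincide, and being square-summable they all vanish.

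Thus $\mathcal{K}_0^{\perp} = \{0\}$ and $\mathrm{span}\,\mathcal{K}_0$ is dense in $H^2$. The two points requiring care are the holomorphy of $G$ in the variable $s$ — handled by the elementary estimate on $\varphi_k$ above — and, on the first route, the injectivity of $\Lambda$, which is immediate from its construction in \cite{Ghosh2024} (the Dirichlet-series route avoids it). The conceptual heart of the argument, and the step I expect to be the main point, is the passage from vanishing on the thin strip $1/2 < \Re s < 1$ to vanishing on the whole half-plane via the identity theorem: this is precisely what lets the large-eigenvalue family $\{\kappa_s : 1/2 < \Re s < 1\}$ inherit totality from the full family $\{\kappa_s : \Re s > 1/2\}$, and it is exactly this family that will feed the $|\lambda|>1$ hypothesis of the Godefroy--Shapiro criterion.
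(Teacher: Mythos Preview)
Your argument is correct and follows essentially the same route as the paper: take $f$ orthogonal to all $\kappa_s$ with $1/2<\Re s<1$, observe that $\Lambda(f)(s)=\langle f,\kappa_s\rangle$ is analytic on $\mathbb{C}_{1/2}$ and vanishes on a non-empty open set, hence is identically zero, and conclude $f=0$ by injectivity of $\Lambda$. Your write-up is in fact more detailed than the paper's (you verify holomorphy explicitly via the estimate on $\varphi_k$ and offer a self-contained Dirichlet-series alternative), but the core idea is the same identity-theorem step.
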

\begin{proof}
 Let $f\in\{ \kappa_s:  1/2<\Re s < 1 \}^{\perp}$. Then
\begin{align*}
    \Lambda(f)(s)=\Lambda^{(s)}(f)=\langle f,\kappa_s\rangle=0,\ \ \forall \  1/2<\Re s < 1.
\end{align*}
Since \( \Lambda(f) \) is an analytic function that vanishes on a non-empty open set, it must be identically zero; that is, \( \Lambda(f) = 0 \). Moreover, since \( \Lambda \) is a composition of isometries, it is in particular injective (see \cite{Ghosh2024}). Hence, \( f = 0 \), and the assertion follows.
\end{proof}

\begin{lemma}\label{ktotal1}
$\{ \kappa_s:  \Re s > 1 \}$ span a dense subspace in $H^2$.
\end{lemma}
\begin{proof}
    Follows as Lemma~\ref{ktotal}. 
\end{proof}

\begin{proposition}\label{dense1}
   $\displaystyle\bigcup_{|\lambda| > 1} \ker(W_n^* - \lambda I)$ and  $\displaystyle\bigcup_{|\lambda| < 1} \ker(W_n^* - \lambda I)$ both span a dense subspace of $H^2$ for all $n\geq2$.  
\end{proposition}
\begin{proof}
Since \( \{ \kappa_s : 1/2 < \Re s < 1 \} \subset \displaystyle\bigcup_{|\lambda| > 1} \ker(W_n^* - \lambda I) \) and \( \{ \kappa_s : \Re s > 1 \} \subset \displaystyle\bigcup_{|\lambda| < 1} \ker(W_n^* - \lambda I) \), the result follows from Lemma~\ref{ktotal} and Lemma~\ref{ktotal1}.  
\end{proof}

\begin{proposition}\label{mixingthm}
$W_n^*$ is mixing and, in particular, hypercyclic operator for all $n\geq2$.
\end{proposition}
\begin{proof}
It is a consequence of Proposition \ref{dense1} and the Godefroy-Shapiro Criterion.
\end{proof}

\begin{corollary}
 $W_n^*$ is a densely Li-Yorke chaotic operator for all $n\geq2$   
\end{corollary}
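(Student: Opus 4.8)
The plan is to derive this directly from hypercyclicity of $W_n^*$ (Theorem~\ref{mixingthm}) together with two standard facts from linear dynamics: the Herrero--Bourdon theorem, which provides a dense $W_n^*$-invariant linear subspace $M\subseteq H^2$ all of whose nonzero vectors are hypercyclic; and the observation that every hypercyclic vector is \emph{irregular}, i.e.\ its orbit satisfies $\liminf_m\|(W_n^*)^m x\|=0$ and $\limsup_m\|(W_n^*)^m x\|=\infty$. Once these are in place, $M$ itself will serve as a dense uncountable scrambled set.

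For the irregularity fact I would argue as follows. Write $T:=W_n^*$ and let $x$ be hypercyclic for $T$. Its orbit is dense in the infinite-dimensional space $H^2$, hence unbounded, so $\limsup_m\|T^m x\|=\infty$. Moreover each iterate $T^m x$ is again hypercyclic for $T$, since deleting finitely many points from a dense subset of a space with no isolated points leaves it dense; applying this repeatedly produces a strictly increasing sequence $(m_k)$ with $\|T^{m_k}x\|<1/k$, whence $\liminf_m\|T^m x\|=0$. Thus $x$ is irregular.

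Finally I would verify that $M$ is a dense uncountable scrambled set for $T$. It is dense in $H^2\neq\{0\}$, hence a nonzero linear subspace over $\mathbb{C}$, hence uncountable; and for any two distinct $x,y\in M$ the vector $x-y$ lies in $M\setminus\{0\}$, so it is hypercyclic and therefore irregular by the previous step, which gives exactly $\liminf_m\|T^m x-T^m y\|=\liminf_m\|T^m(x-y)\|=0$ and $\limsup_m\|T^m x-T^m y\|=\limsup_m\|T^m(x-y)\|=\infty$. Hence $W_n^*$ is densely Li--Yorke chaotic for every $n\geq 2$. (Alternatively, one may simply invoke the implication ``hypercyclic $\Rightarrow$ densely Li--Yorke chaotic'' recorded in the introduction.) I do not expect a genuine obstacle: the only delicate points are the $\liminf$ part of irregularity, handled above by passing to iterates, and the remark that a linear subspace contained in the set of irregular vectors together with $0$ is automatically scrambled; everything else is immediate from Theorem~\ref{mixingthm}.
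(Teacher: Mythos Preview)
Your argument is correct. The paper's own proof is precisely the one-line alternative you mention at the end: it simply cites the implication ``hypercyclic $\Rightarrow$ densely Li--Yorke chaotic'' from \cite{Bernardes2015} and is done.

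What you have written is, in effect, a self-contained proof of that implication in this setting: Herrero--Bourdon supplies a dense $T$-invariant linear manifold $M$ of hypercyclic vectors (plus $0$), every hypercyclic vector is irregular, and linearity turns $M$ into a dense scrambled set. This is exactly the mechanism behind the result cited in the paper, so the two approaches are the same in substance; yours just unpacks the black box. One minor simplification: for the $\liminf$ part you do not need to pass to iterates via ``$T^m x$ is again hypercyclic''. Density of the orbit in $H^2$ already forces it to accumulate at $0$, which immediately gives infinitely many $m$ with $\|T^m x\|$ arbitrarily small and hence $\liminf_m\|T^m x\|=0$.
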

\begin{proof}
Hypercyclicity implies densely Li-Yorke chaos (see \cite{Bernardes2015}).
\end{proof}

In the previous section, we raised the question of whether \( W_k \)  is Li–Yorke chaotic. We now answer this question.

\begin{proposition}
The family \( (W_n)_{n \in \mathbb{N}} \) forms a semigroup of operators that are not Li-Yorke chaotic.
\end{proposition}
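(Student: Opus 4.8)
The plan is to show something stronger than the failure of Li--Yorke chaos: for every $n$ the operator $W_n$ admits no \emph{scrambled pair} at all, so in particular it cannot have an uncountable scrambled set. The case $n=1$ is immediate, since $W_1 = I_{H^2}$ and hence $W_1^k x - W_1^k y = x-y$ for every $k$, so that $\limsup_{k\to\infty}\|W_1^k x - W_1^k y\| = \|x-y\| < \infty$ whenever $x\neq y$; the $\limsup$ condition in the definition of a scrambled set is violated. So the real content concerns $n\geq 2$.

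For $n\geq 2$ the key input is the identity \eqref{Wident}, namely $W_n^* W_n = n I_{H^2}$, which yields
\[
\|W_n f\|^2 = \langle W_n^* W_n f, f\rangle = n\,\|f\|^2 \qquad (f\in H^2),
\]
i.e.\ $W_n/\sqrt n$ is an isometry. Iterating gives $\|W_n^k f\| = n^{k/2}\|f\|$ for all $k\in\mathbb{N}$ and $f\in H^2$. Now I would argue by contradiction: if $S$ were a scrambled set for $W_n$ and $x,y\in S$ with $x\neq y$, then by linearity
\[
\|W_n^k x - W_n^k y\| = \|W_n^k(x-y)\| = n^{k/2}\|x-y\|,
\]
and since $\|x-y\|>0$ while $n^{k/2}\to\infty$, we obtain $\liminf_{k\to\infty}\|W_n^k x - W_n^k y\| = +\infty$, contradicting the requirement that this $\liminf$ equal $0$. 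Hence $W_n$ has no scrambled pair, and therefore is not Li--Yorke chaotic.

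Equivalently, one can phrase the argument through irregular vectors: by the characterization underlying Corollary~\ref{LiNec} (see \cite{Bermudez2011}), a bounded linear operator is Li--Yorke chaotic if and only if it has a semi-irregular vector, i.e.\ a vector $f$ with $\liminf_{k\to\infty}\|T^k f\| = 0$; but $\|W_n^k f\| = n^{k/2}\|f\|$ is either identically $0$ (when $f=0$) or tends to $\infty$, so no such vector exists. There is no real obstacle here — the point to highlight is the contrast with Corollary~\ref{LiNec}: although $W_n$ meets all three necessary conditions listed there ($\sigma(W_n)\cap\mathbb{T}\neq\emptyset$, non-normal, non-compact), those conditions are far from sufficient, and the uniform expansion $\|W_n f\| = \sqrt n\,\|f\|$ with $\sqrt n > 1$ is exactly what forces the absence of Li--Yorke chaos. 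I would close the section with a brief remark making this contrast explicit.
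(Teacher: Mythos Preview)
Your argument is correct and rests on the same key computation as the paper: from $W_n^*W_n = nI_{H^2}$ one gets $\|W_n^k f\| = n^{k/2}\|f\|$, and this exponential growth rules out Li--Yorke chaos. The difference is in packaging. The paper observes that this growth makes each $W_n$ ($n\geq 2$) \emph{uniformly positively expansive} --- there exists $m$ with $\|W_n^m f\|\geq 2$ for all unit vectors $f$ --- and then invokes \cite[Theorem~C]{Bernardes2018}, which asserts that uniformly positively expansive operators are never Li--Yorke chaotic. Your route is more elementary: you bypass the external citation and verify directly from the definition that no scrambled pair can exist, since $\|W_n^k x - W_n^k y\| = n^{k/2}\|x-y\|\to\infty$ forces the $\liminf$ to be $+\infty$ rather than $0$. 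Your approach is self-contained and arguably cleaner here; the paper's framing has the advantage of situating the result within a named class of operators from the literature. One small caution on your alternative phrasing: the characterization you allude to (Li--Yorke chaos $\Leftrightarrow$ existence of a semi-irregular vector) requires the full condition $\liminf_k\|T^k f\|=0$ \emph{and} $T^k f\not\to 0$, not just the $\liminf$ part --- but since your first argument already settles the matter, this does not affect correctness.
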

\begin{proof}
Recall that $T \in \mathcal{L}(Y)$ is \textit{uniformly positively expansive} if there exists $n \in \mathbb{N}$ such that $\|T^n z\| \geq 2$ for all $z \in S_Y$. Since $W_k/\sqrt{k}$ is an isometry, for any $f \in S_{H^2}$ we have
\[\|W_k^n f\| = \|W_{k^n} f\| = \sqrt{k}^n,\] which shows that $W_k$ is uniformly positively expansive for all $k, n \geq 2$. By \cite[Theorem C]{Bernardes2018}, no such operator can be Li-Yorke chaotic. The case $k = 1$ is trivial since $W_1 = I_{H^2}$.
\end{proof}

\subsection{A Devaney chaotic semigroup} To conclude this section, we will show that the operators \( W_n^* \) are not only hypercyclic but also Devaney chaotic for all \( n \geq 2 \). \vspace{2mm}

First, observe that
\[
\{ \kappa_s : \Re(s) = 1 \} \subset \bigcup_{|\lambda| = 1} \ker(W_n^* - \lambda I),
\]
since for \( s = 1 + it \), with \( t \in \mathbb{R} \), we have \( W_n^* \kappa_s = n^{1 - \overline{s}} \kappa_s = n^{it} \kappa_s \), and \( |n^{it}| = 1 \). Now, noting that \( n^{it} = e^{it \log n} \), we define
\[
t_{n,\alpha} := \frac{\pi \alpha}{\log n}, \quad \alpha \in \mathbb{Q},
\]
so that, by Remark~\ref{PerChar}, we obtain
\[
\kappa_{s_{n,\alpha}} \in \mathrm{Per}(W_n^*) \quad \text{for all } n \geq 2,
\]
where \( s_{n,\alpha} = 1 + i t_{n,\alpha} \). Since \( \mathbb{Q} \) is dense in \( \mathbb{R} \), it follows that \( \{ t_{n,\alpha} \}_{\alpha \in \mathbb{Q}} \) is dense in \( \mathbb{R} \), and therefore \( \{ s_{n,\alpha} \}_{\alpha \in \mathbb{Q}} \) is dense in the vertical line \( \{ \Re(s) = 1 \} \), for each $n\geq2$.

\begin{lemma}\label{k1total}
$\{ \kappa_{s_{n,\alpha}}\}_{\alpha\in\mathbb{Q}}$ span a dense subspace in $H^2$, for all $n\geq2$.
\end{lemma}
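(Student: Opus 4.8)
The plan is to run the very same argument that proved \autoref{ktotal}, but restricted to the discrete set of points $s_{n,\alpha}$ on the vertical line $\Re s = 1$, using the identity principle for analytic functions. Fix $n\geq 2$ and suppose $f\in\{\kappa_{s_{n,\alpha}}\}_{\alpha\in\mathbb{Q}}^{\perp}$. Then $\Lambda(f)(s_{n,\alpha})=\Lambda^{(s_{n,\alpha})}(f)=\langle f,\kappa_{s_{n,\alpha}}\rangle=0$ for every $\alpha\in\mathbb{Q}$. The key point is that $\{s_{n,\alpha}\}_{\alpha\in\mathbb{Q}}$ has an accumulation point; indeed $\mathbb{Q}$ is dense in $\mathbb{R}$, so these points accumulate everywhere along the line $\Re s = 1$, and in particular they have a limit point lying in the open half-plane's closure. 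I would then want to invoke the identity theorem to conclude $\Lambda(f)\equiv 0$, and then injectivity of $\Lambda$ (it is a composition of isometries, see \cite{Ghosh2024}) to get $f=0$, exactly as in \autoref{ktotal}.

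The one genuine obstacle is that the zeros $s_{n,\alpha}$ accumulate on the boundary line $\Re s = 1$, not at an interior point of $\mathbb{C}_{1/2}$, so a naive application of the identity principle to $\Lambda(f)$ as a function on $\mathbb{C}_{1/2}$ is not immediately justified. To handle this I would note that the functions $\varphi_k(s)=-\frac{1}{s}\bigl((k+1)^{1-s}-k^{1-s}\bigr)$ are entire in $s$ on $\{\Re s > 1/2\}$ and in fact extend analytically past the line $\Re s =1$; more to the point, $s\mapsto \Lambda^{(s)}(f)=\sum_k \hat f(k)\overline{\varphi_k(\bar s)}$ — equivalently $\Lambda(f)$ — is analytic on the open set $\{\Re s>1/2\}$, which \emph{contains} a full two-sided neighborhood of each point $s_{n,\alpha}$ since $\Re s_{n,\alpha}=1>1/2$. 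Thus the line $\Re s = 1$ sits strictly inside the domain of analyticity of $\Lambda(f)$, the set $\{s_{n,\alpha}\}_\alpha$ has accumulation points in that open domain, and the identity theorem applies directly to force $\Lambda(f)\equiv 0$ on $\{\Re s > 1/2\}$.

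After that the argument closes mechanically: $\Lambda$ injective $\Rightarrow f=0$, hence $\{\kappa_{s_{n,\alpha}}\}_{\alpha\in\mathbb{Q}}^{\perp}=\{0\}$, hence the span is dense in $H^2$. I would keep the write-up short, essentially quoting \autoref{ktotal} and only emphasizing the replacement of the open set of evaluation points by a sequence with an accumulation point interior to $\{\Re s > 1/2\}$. The main thing to get right is precisely this observation that $\Re s_{n,\alpha} = 1 > 1/2$, so the evaluation points are interior to the region where $\Lambda(f)$ is holomorphic, which is what licenses the identity theorem.
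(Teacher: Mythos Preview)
Your argument is correct and is essentially the same as the paper's: take $f$ orthogonal to all $\kappa_{s_{n,\alpha}}$, observe that $\Lambda(f)$ vanishes on $\{s_{n,\alpha}\}_{\alpha\in\mathbb{Q}}$, use that this set has an accumulation point in the domain of analyticity to force $\Lambda(f)\equiv 0$, and conclude $f=0$ by injectivity of $\Lambda$. You are in fact more careful than the paper in explicitly noting that $\Re s_{n,\alpha}=1>1/2$ places the accumulation points in the \emph{interior} of $\{\Re s>1/2\}$, which is exactly what the identity theorem requires.
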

\begin{proof}
Let \( f \in \{ \kappa_{s_{n,\alpha}} \}_{\alpha \in \mathbb{Q}}^\perp \). Then
\[
\Lambda(f)(s_{n,\alpha}) = \Lambda^{(s_{n,\alpha})}(f) = \langle f, \kappa_{s_{n,\alpha}} \rangle = 0.
\]
Since the set \( \{ s_{n,\alpha} \}_{\alpha \in \mathbb{Q}} \) is dense in the vertical line \( \{ \Re(s) = 1 \} \), the zero set of \( \Lambda(f) \) has an accumulation point (in fact, every point on the line is an accumulation point). As \( \Lambda(f) \) is analytic, it must vanish identically; that is, \( \Lambda(f) = 0 \). Therefore, \( f = 0 \), and the result follows.
\end{proof}
\begin{theorem}
    $W_n^{\ast}$ is a Devaney chaotic operator for all $n\geq 2.$
\end{theorem}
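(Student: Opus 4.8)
The plan is to apply the second half of the Godefroy--Shapiro Criterion (\autoref{GSC}). We already know from \autoref{mixingthm} (via \autoref{dense1} and \autoref{dense2}) that the unimodular eigenvector conditions for mixing are satisfied, so the only thing left to verify is that the subspace
\[
Z_0 := \mathrm{span}\{ x \in H^2 \; ; \; W_n^* x = e^{\alpha \pi i} x \text{ for some } \alpha \in \mathbb{Q} \}
\]
is dense in $H^2$. The key observation, already recorded just before \autoref{k1total}, is that for $s_{n,\alpha} = 1 + i t_{n,\alpha}$ with $t_{n,\alpha} = \pi\alpha/\log n$ and $\alpha \in \mathbb{Q}$, the zeta kernel $\kappa_{s_{n,\alpha}}$ is an eigenvector of $W_n^*$ with eigenvalue $n^{i t_{n,\alpha}} = e^{i t_{n,\alpha}\log n} = e^{i\pi\alpha}$, a root of unity. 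Hence $\{\kappa_{s_{n,\alpha}}\}_{\alpha\in\mathbb{Q}} \subset Z_0$.

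First I would note that $Z_0 \supseteq \mathrm{span}\{\kappa_{s_{n,\alpha}} : \alpha \in \mathbb{Q}\}$, so that density of $Z_0$ follows immediately from \autoref{k1total}, which asserts precisely that $\{\kappa_{s_{n,\alpha}}\}_{\alpha\in\mathbb{Q}}$ spans a dense subspace of $H^2$ for every $n \geq 2$. Combining this with \autoref{dense1} and \autoref{dense2}, all three hypotheses of the Godefroy--Shapiro Criterion are met, and since $H^2$ is a complex Hilbert space, \autoref{GSC} yields that $W_n^*$ is Devaney chaotic for all $n \geq 2$. (Equivalently, one may phrase the last step through \autoref{PerChar}: the displayed spanning set consists of periodic points of $W_n^*$, so $\mathrm{Per}(W_n^*)$ is dense, and together with the already-established hypercyclicity this is the definition of Devaney chaos.)

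There is essentially no obstacle here: the substantive analytic work — identifying the unimodular eigenvalues $e^{i\pi\alpha}$ among the spectrum via the zeta kernels, and proving the density statement \autoref{k1total} through the injectivity of $\Lambda$ and the identity theorem for analytic functions — has already been carried out in the preceding lemmas. The proof of the theorem is therefore just the assembly step: cite \autoref{dense1}, \autoref{dense2}, \autoref{k1total}, and apply \autoref{GSC}. If I wanted to be careful about one routine point, it would be checking that $\alpha \in \mathbb{Q}$ indeed makes $t_{n,\alpha}\log n = \pi\alpha$ an integer multiple of $\pi$ only when $\alpha \in \mathbb{Z}$ — but the criterion only requires $e^{\alpha\pi i}$ with $\alpha$ rational, not that the eigenvalue be a root of unity in a stronger sense, and in any case $e^{i\pi\alpha}$ for rational $\alpha$ is always a root of unity, so the periodic-point interpretation via \autoref{PerChar} goes through verbatim.
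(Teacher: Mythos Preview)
Your proposal is correct and follows essentially the same route as the paper: use the zeta kernels $\kappa_{s_{n,\alpha}}$ with $s_{n,\alpha}=1+i\pi\alpha/\log n$ as eigenvectors with eigenvalue $e^{i\pi\alpha}$, invoke \autoref{k1total} for density, and conclude via the Godefroy--Shapiro Criterion (equivalently, via \autoref{PerChar} and the hypercyclicity from \autoref{mixingthm}). One small wording slip: the hypotheses for the mixing part of \autoref{GSC} concern eigenvalues with $|\lambda|<1$ and $|\lambda|>1$, not ``unimodular'' ones, but since you cite \autoref{dense1} and \autoref{dense2} you clearly have the right conditions in mind.
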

\begin{proof}
Since \( \{ \kappa_{s_{n,\alpha}} \}_{\alpha \in \mathbb{Q}} \subset \mathrm{Per}(W_n^*) \), it follows that the set of periodic points \( \mathrm{Per}(W_n^*) \) is dense in \( H^2 \). Moreover, Theorem~\ref{mixingthm} establishes that \( W_n^* \) is hypercyclic; therefore, the Godefroy–Shapiro Criterion ensures that \( W_n^* \) is Devaney chaotic.
\end{proof}

\section{A semigroup of frequently hypercyclic operators}\label{Frequenthyp}

This section is devoted to proving that the semigroup \( (W_n^{\ast})_{n \in \mathbb{N}} \) consists of frequently hypercyclic operators. We begin by presenting some preliminary results.

\begin{definition}
We say that a Banach space \( Y \) is of \textit{type \( p \)} for some \( 1 \leq p \leq 2 \) if there exists a constant \( C \geq 1 \) such that
\[
\left\| \sum_{i=1}^n \varepsilon_i x_i \right\|_{L^2(Y)} \leq C \left( \sum_{i=1}^n \|x_i\|^p \right)^{1/p}
\]
for every \( x_1, \ldots, x_n \in Y \), where \( (\varepsilon_n)_{n \in \mathbb{N}} \) is a Bernoulli sequence.
\end{definition}

In \cite{LiQueffelec2017} it is shown that every Hilbert space is type 2.

\begin{definition}
A complex measure \( \sigma \) on \( \mathbb{T} \) is said to be \textit{continuous} if it has no discrete part, i.e. \( \sigma(\{\lambda\}) = 0 \) for every \( \lambda \in \mathbb{T} \). 
\end{definition}

\begin{definition}
Let \( T \in \mathcal{L}(Y) \). Given a probability measure \( \sigma \) on \( \mathbb{T} \), we say that \( T \in \mathcal{L}(Y) \) has a \textit{\( \sigma \)-spanning set of \( \mathbb{T} \)-eigenvectors} if for every \( \sigma \)-measurable subset \( A \subset \mathbb{T} \) with \( \sigma(A) = 1 \), the eigenspaces \( \ker(T - \lambda I) \), \( \lambda \in A \), span a dense subspace of \( Y \). If \( T \) has a \( \sigma \)-spanning set of \( \mathbb{T} \)-eigenvectors for some continuous measure \( \sigma \), then we say that \( T \) has a \textit{perfectly spanning set of \( \mathbb{T} \)-eigenvectors}.
\end{definition}

\begin{proposition}[{See \cite[Corollary 6.24]{BayartMatheron}}]\label{FreqEig}
Let \( Y \) be a complex separable infinite-dimensional Banach space, and let \( T \in \mathcal{L}(Y) \). If \( Y \) is of type 2 and \( T \) admits a perfectly spanning set of \( \mathbb{T} \)-eigenvectors, then \( T \) is frequently hypercyclic.
\end{proposition}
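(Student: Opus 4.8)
\emph{Proof strategy for Proposition~\ref{FreqEig}.} The plan is to deduce frequent hypercyclicity from ergodic theory: I would produce a $T$-invariant Borel probability measure $\mu$ on $Y$ with full support with respect to which $T$ is ergodic, and then invoke Birkhoff's theorem. The measure $\mu$ will be a Gaussian measure ``synthesised'' from the $\mathbb{T}$-eigenvectors of $T$ (this is the Bayart--Grivaux route). So first, fix a continuous probability measure $\sigma$ on $\mathbb{T}$ witnessing that $T$ has a $\sigma$-spanning set of $\mathbb{T}$-eigenvectors. The bare spanning hypothesis must be upgraded to a quantitative form: there exists a bounded, strongly $\sigma$-measurable field $E\colon\mathbb{T}\to Y$ with $TE(\lambda)=\lambda E(\lambda)$ for $\sigma$-a.e.\ $\lambda$ and such that $\overline{\operatorname{span}}\{E(\lambda):\lambda\in A\}=Y$ for every $A$ with $\sigma(A)=1$. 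This measurable-selection step is a Baire-category argument: after normalising, a generic choice of eigenvector field inherits the density of the eigenspaces.

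Next, I would build $\mu$ as the law of an explicit $Y$-valued Gaussian series. Since $\sigma$ is continuous, for each $k$ one may partition $\mathbb{T}$ into $\sigma$-measurable pieces $A^{(k)}_1,\dots,A^{(k)}_{2^k}$ of equal mass $2^{-k}$; choosing $\lambda^{(k)}_j\in A^{(k)}_j$ and i.i.d.\ standard complex Gaussians $(g^{(k)}_j)$, set $\xi^{(k)}=\sum_{j}\sigma(A^{(k)}_j)^{1/2}\,g^{(k)}_j\,E(\lambda^{(k)}_j)$. The sequence $(\xi^{(k)})$ is a martingale, and \textbf{this is the step where the type-2 hypothesis is essential, and the one I expect to be the main obstacle}: the type-2 inequality gives $\|\xi^{(k)}\|_{L^2(Y)}^2\lesssim\sum_j\sigma(A^{(k)}_j)\,\|E(\lambda^{(k)}_j)\|^2\le\|E\|_\infty^2$, a bound uniform in $k$, so $(\xi^{(k)})$ converges in $L^2(Y)$ to a random vector $\xi$ whose distribution is the desired Gaussian measure $\mu$. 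Without a geometric restriction of this kind the series need not converge in $Y$; type 2 is precisely the input that makes the eigenvector synthesis legitimate.

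Then I would check the two structural properties of $\mu$. \emph{Invariance}: because $TE(\lambda)=\lambda E(\lambda)$ with $|\lambda|=1$, the pushforward $T_*\mu$ is the limiting law of $\sum_j\sigma(A^{(k)}_j)^{1/2}g^{(k)}_j\,\lambda^{(k)}_j E(\lambda^{(k)}_j)$, and since each $\lambda^{(k)}_j g^{(k)}_j$ has the same distribution as $g^{(k)}_j$ (rotational invariance of complex Gaussians), $T_*\mu=\mu$. \emph{Full support}: the Cameron--Martin space of $\mu$ is the range of the operator $f\mapsto\int_{\mathbb{T}}f(\lambda)E(\lambda)\,d\sigma(\lambda)$ on $L^2(\sigma)$, which is dense in $Y$ precisely because $\{E(\lambda):\lambda\in A\}$ spans densely for every $A$ of full measure; hence $\operatorname{supp}\mu=Y$.

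Finally, ergodicity. On $L^2(\mu)$ the Koopman operator of $T$, restricted to the orthocomplement of the constants, has spectral type absolutely continuous with respect to the convolution powers of $\sigma$ (the Gaussian chaos-decomposition computation); since $\sigma$ is continuous it has no point spectrum off the constants, which is the classical criterion for weak mixing, so in particular $(T,\mu)$ is ergodic. Birkhoff's pointwise ergodic theorem, applied to the indicators of the members of a countable base of $Y$, then yields that for $\mu$-a.e.\ $x\in Y$ and every nonempty open $V\subset Y$,
\[
\underline{\mathrm{dens}}\,\mathrm{N}(x,V)=\lim_{N\to\infty}\frac1N\,\mathrm{card}\big(\mathrm{N}(x,V)\cap[1,N]\big)=\mu(V)>0,
\]
the strict positivity coming from $\operatorname{supp}\mu=Y$. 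Thus $\mu$-almost every $x$ is a frequently hypercyclic vector for $T$, and $T$ is frequently hypercyclic. $\square$
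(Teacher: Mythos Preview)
The paper does not prove Proposition~\ref{FreqEig}; it is quoted as \cite[Corollary~6.24]{BayartMatheron} and used as a black box, so there is no ``paper's own proof'' to compare against. Your outline is precisely the Bayart--Grivaux argument that underlies that corollary (measurable $\mathbb{T}$-eigenvector field, Gaussian measure via the type~2 hypothesis, $T$-invariance and full support, weak mixing from continuity of $\sigma$ through the Wiener chaos decomposition, then Birkhoff), so in effect you have reconstructed the cited proof rather than produced an alternative.

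Two points would need tightening in a full write-up. First, the existence of a single bounded measurable field $E$ with the perfectly-spanning property is a genuine lemma (in Bayart--Matheron it is established separately), not a one-line Baire-category remark; one typically starts from countably many fields and patches them together. Second, your discrete approximations $\xi^{(k)}$ are not a martingale as written unless the partitions $(A^{(k)}_j)_j$ refine one another and the Gaussians on the refined cells are built consistently from those on the coarser ones; the cleaner route---and the one actually taken in the reference---is to observe that type~2 makes the operator $K\colon L^2(\sigma)\to Y$, $Kf=\int_{\mathbb{T}} f(\lambda)E(\lambda)\,d\sigma(\lambda)$, $\gamma$-radonifying, and to take $\mu$ directly as the Gaussian measure with covariance $KK^*$. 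With those adjustments your sketch is sound.
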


Let us denote $m$ the normalized Lebesgue measure on $\mathbb{T}$, which is a probability measure on this space.

\begin{lemma}
Let \( A \subset \mathbb{T} \) be a \( m \)-measurable subset such that \( m(A) = 1 \). Then \( A \) has no isolated points.
\end{lemma}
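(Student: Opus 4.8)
The plan is to argue by contradiction using a measure-theoretic observation: if $A$ had an isolated point $\lambda_0$, then $\lambda_0$ would be open in the subspace topology restricted to $A$, but more usefully, the complement of $\{\lambda_0\}$ inside a neighborhood would be bounded away, and I can extract a genuine open arc. Concretely, suppose $\lambda_0 \in A$ is isolated in $A$. Then there exists $\varepsilon > 0$ such that the open arc $I = \{\lambda \in \mathbb{T} : |\lambda - \lambda_0| < \varepsilon\}$ satisfies $I \cap A = \{\lambda_0\}$. Consequently $I \setminus \{\lambda_0\} \subset \mathbb{T} \setminus A$, and therefore $m(\mathbb{T} \setminus A) \geq m(I \setminus \{\lambda_0\}) = m(I) > 0$, since any nonempty open arc has strictly positive normalized Lebesgue measure while a single point has measure zero. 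This contradicts $m(A) = 1$, i.e. $m(\mathbb{T}\setminus A) = 0$.

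First I would recall that $m$ is the normalized Lebesgue (arc-length) measure on $\mathbb{T}$, so that for any arc $I$ of length $\ell > 0$ one has $m(I) = \ell / (2\pi) > 0$, and that singletons are $m$-null. These two facts are the only inputs needed. Then I would set up the contradiction hypothesis, produce the arc $I$ witnessing that $\lambda_0$ is isolated in $A$, and observe the inclusion $I \setminus \{\lambda_0\} \subseteq \mathbb{T} \setminus A$. Taking measures and using monotonicity gives $m(\mathbb{T}\setminus A) \geq m(I) > 0$, contradicting $m(A)=1$. Hence no point of $A$ is isolated in $A$, which is exactly the assertion.

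There is really no substantial obstacle here; the statement is elementary once one unwinds the definition of "isolated point" in the subspace topology. The only mild care needed is to make sure the neighborhood of $\lambda_0$ that meets $A$ only at $\lambda_0$ can be taken to be a genuine open arc of $\mathbb{T}$ (rather than some abstract open set), which is immediate since arcs form a neighborhood basis of each point of $\mathbb{T}$. One should also note this lemma is exactly what is needed to conclude that the Lebesgue measure $m$ on $\mathbb{T}$ is continuous (no atoms) and, more to the point, that any full-measure set is "topologically rich" enough to apply Proposition~\ref{FreqEig} with $\sigma = m$: the eigenvalues $n^{it}$ with $t \in \mathbb{R}$ sweep out all of $\mathbb{T}$, and the corresponding zeta kernels $\kappa_{1+it}$ will span densely over any full-measure set of such $\lambda$, by the same analyticity argument used in Lemma~\ref{k1total}.
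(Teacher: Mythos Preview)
Your proof is correct and follows essentially the same approach as the paper: both argue by contradiction, extract an open neighborhood of the isolated point meeting $A$ only at that point, and observe that the punctured neighborhood lies in $\mathbb{T}\setminus A$, forcing $m(\mathbb{T}\setminus A)>0$. The only cosmetic difference is that you take the neighborhood to be an arc explicitly, while the paper uses a generic open set $V_p$; the content is identical.
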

\begin{proof}
Suppose that \( p\) is an isolated point in \( A \). Then there exists an open set \( V_p \subset \mathbb{T} \) such that $V_p \cap A = \{p\}$. We also have that
\[
m(\mathbb{T}) = m(\mathbb{T} \setminus A) + m(A).
\]
As \( m(A) = 1 \), it follows that \( m(\mathbb{T} \setminus A) = 0 \), which is a contradiction because \( \mathbb{T} \setminus A \) contains the non-empty open set \( V_p \setminus \{p\} \), which must have positive measure.
\end{proof}

Under the conditions of the previous lemma, every point in \( A \) is an accumulation point. In particular, $A$ cannot be a  finite set. \vspace{2mm}

Let us define the map, for each $n\geq2,$
\[
\varphi_n : \left(0, \frac{2\pi}{\log n}\right) \to \mathbb{T} \setminus \{1\}, \quad \varphi_n(t) = e^{it \log n}.
\]
Each \( \varphi_n \) is a homeomorphism, as it arises from the classical homeomorphism
\[
\varphi : (0, 2\pi) \to \mathbb{T} \setminus \{1\}, \quad \varphi(t) = e^{it}.
\]
\begin{lemma}
    Let \( A \subset \mathbb{T} \) be a \( m \)-measurable subset such that \( m(A) = 1 \). Then $\{ \kappa_{1+it}\}_{t\in \varphi_n^{-1}(A)}$ span a dense subspace in $H^2$, for all $n\geq2$.  
\end{lemma}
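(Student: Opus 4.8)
The plan is to argue by duality, exactly as in the proofs of Lemma~\ref{ktotal} and Lemma~\ref{k1total}: take $f \in H^2$ orthogonal to every $\kappa_{1+it}$ with $t \in \varphi_n^{-1}(A)$, and show that $f = 0$, which forces the span to be dense. For such an $f$ the defining property of the zeta kernels gives
\[
\Lambda(f)(1+it) = \Lambda^{(1+it)}(f) = \langle f, \kappa_{1+it}\rangle = 0 \qquad \text{for all } t \in \varphi_n^{-1}(A),
\]
where $\Lambda(f)$ is analytic on $\mathbb{C}_{1/2}$ and the vertical line $\{\Re s = 1\}$ lies inside this half-plane (since $1 > 1/2$, so $\kappa_{1+it}$ is well defined and $\Lambda^{(1+it)}$ is bounded on $H^2$).

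The key step is to show that $\varphi_n^{-1}(A)$ has an accumulation point in $\left(0, \frac{2\pi}{\log n}\right)$, so that $\{1+it : t \in \varphi_n^{-1}(A)\}$ has an accumulation point on the line $\{\Re s = 1\}\subset \mathbb{C}_{1/2}$. Since $\varphi_n$ wraps the interval once around the circle and is a $C^1$ diffeomorphism onto $\mathbb{T}\setminus\{1\}$ with derivative of constant modulus $\log n$, it maps Lebesgue-null sets to Lebesgue-null sets and conversely; hence from $m(\mathbb{T}\setminus A) = 0$ we get that $\left(0,\frac{2\pi}{\log n}\right)\setminus \varphi_n^{-1}(A)$ is Lebesgue-null, i.e. $\varphi_n^{-1}(A)$ has full measure in the interval. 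A full-measure subset of an interval has no isolated points — by the same argument used in the preceding lemma for $m(A)=1$ subsets of $\mathbb{T}$ — so every point of $\varphi_n^{-1}(A)$ is an accumulation point, and in particular the set is infinite with accumulation points interior to the interval.

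With this in hand, $\Lambda(f)$ is analytic on the connected open set $\mathbb{C}_{1/2}$ and vanishes on a set having an accumulation point in $\mathbb{C}_{1/2}$, so by the identity theorem $\Lambda(f) \equiv 0$. Since $\Lambda$ is a composition of isometries and therefore injective (as recalled from \cite{Ghosh2024}), we conclude $f = 0$. Thus $\{\kappa_{1+it}\}_{t\in\varphi_n^{-1}(A)}^{\perp} = \{0\}$, so the span is dense in $H^2$ for every $n\geq 2$.

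I expect the only point requiring genuine care to be the measure-theoretic transfer under $\varphi_n$, namely verifying that $\varphi_n$ and $\varphi_n^{-1}$ both preserve Lebesgue-null sets; this is routine given that $\varphi_n$ is a diffeomorphism onto $\mathbb{T}\setminus\{1\}$ with nonvanishing derivative, and removing the single point $1$ from $A$ does not affect measures. Everything else is the standard Godefroy–Shapiro-style density argument already employed earlier in the section.
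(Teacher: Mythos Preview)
Your proof is correct and follows essentially the same route as the paper: take $f$ in the orthogonal complement, use the reproducing property of $\kappa_s$ to see that $\Lambda(f)$ vanishes on $\{1+it:t\in\varphi_n^{-1}(A)\}$, produce an accumulation point, apply the identity theorem, and conclude via the injectivity of $\Lambda$. The only cosmetic difference is that the paper obtains the accumulation point by invoking the preceding lemma directly on $A\subset\mathbb{T}$ and then pushing it through the homeomorphism $\varphi_n^{-1}$, whereas you first transfer full measure to the interval via the diffeomorphism and then argue there; both are equally valid and amount to the same idea.
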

\begin{proof}
 Let \( f \in \{ \kappa_{1+it}\}_{t\in \varphi_n^{-1}(A)}^\perp \). Then
\[
\Lambda(f)(1+it) = \Lambda^{(1+it)}(f) = \langle f, \kappa_{1+it} \rangle = 0,\ \ \forall t\in \varphi_n^{-1}(A).
\]
Since \( A \) cannot be a finite set, let us take \( a \in A \setminus \{1\} \). As \( a \) is an accumulation point of \( A \), it follows that \( t_0 = \varphi_n^{-1}(a)\) is an accumulation point of \( \varphi_n^{-1}(A) \) by the continuity. Therefore, the zero set of \( \Lambda(f) \) has an accumulation point. Since \( \Lambda(f) \) is analytic, it must vanish identically; that is, \( \Lambda(f) = 0 \). Hence, \( f = 0 \), and the result follows.
\end{proof}

\begin{proposition}\label{KerMeasure}
    Let \( A \subset \mathbb{T} \) be a \( m \)-measurable subset such that \( m(A) = 1 \). Then  \(\displaystyle\bigcup_{\lambda\in A} \ker(W_n^* - \lambda I) \) span a dense subspace of $H^2$, for all $n\geq2$.
\end{proposition}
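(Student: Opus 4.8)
The plan is to reduce the statement to the density lemma established immediately above by identifying which zeta kernels are $\mathbb{T}$-eigenvectors of $W_n^*$. First I would recall from \cite{Calderaro2024} that $W_n^*\kappa_s = n^{1-\overline{s}}\kappa_s$ for every $s$ with $\Re s > 1/2$. Specializing to $s = 1 + it$ with $t \in \mathbb{R}$ gives $n^{1-\overline{s}} = n^{it} = e^{it\log n} = \varphi_n(t)$, which has modulus $1$. Hence, for each $t$ in the domain $(0, 2\pi/\log n)$ of $\varphi_n$, the vector $\kappa_{1+it}$ is a $\mathbb{T}$-eigenvector of $W_n^*$ with eigenvalue $\varphi_n(t) \in \mathbb{T}\setminus\{1\}$; i.e. $\kappa_{1+it} \in \ker\!\big(W_n^* - \varphi_n(t) I\big)$.

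Next, fix a measurable $A \subset \mathbb{T}$ with $m(A) = 1$ and take any $t \in \varphi_n^{-1}(A)$. By definition $\varphi_n(t) \in A$, so by the previous paragraph $\kappa_{1+it} \in \ker\!\big(W_n^* - \varphi_n(t) I\big) \subseteq \bigcup_{\lambda \in A}\ker(W_n^* - \lambda I)$. Consequently the entire family $\{\,\kappa_{1+it} : t \in \varphi_n^{-1}(A)\,\}$ is contained in $\bigcup_{\lambda \in A}\ker(W_n^* - \lambda I)$.

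Finally I would invoke the lemma preceding this proposition, which asserts precisely that $\{\,\kappa_{1+it}\,\}_{t \in \varphi_n^{-1}(A)}$ spans a dense subspace of $H^2$ (the point being that $m(A)=1$ forces $A$ to have accumulation points in $\mathbb{T}\setminus\{1\}$, hence $\varphi_n^{-1}(A)$ has an accumulation point by continuity of $\varphi_n^{-1}$, so any $f$ orthogonal to the family makes the analytic function $\Lambda(f)$ vanish on a set with an accumulation point, whence $\Lambda(f)\equiv 0$ and $f = 0$ by injectivity of $\Lambda$). Since a linear subspace that contains a set whose closed span is all of $H^2$ must itself be dense, it follows that $\bigcup_{\lambda \in A}\ker(W_n^* - \lambda I)$ spans a dense subspace of $H^2$ for every $n \geq 2$.

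I do not expect a genuine obstacle here: the argument is a bookkeeping chain on top of the already-proved lemma. The only points that need care are that $\varphi_n$ is a homeomorphism onto $\mathbb{T}\setminus\{1\}$, so that each $\lambda \in A\setminus\{1\}$ is realized as $\varphi_n(t)$ for a unique admissible $t$, and that the single point $\lambda = 1$ (which has $m$-measure zero) is simply irrelevant — we only need a dense-spanning \emph{subfamily} of eigenvectors, not one eigenvector attached to every element of $A$, and the zeta kernels along $\Re s = 1$ supply exactly such a subfamily.
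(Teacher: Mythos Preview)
Your proposal is correct and follows essentially the same approach as the paper: you show that $\{\kappa_{1+it}\}_{t\in\varphi_n^{-1}(A)}\subset\bigcup_{\lambda\in A}\ker(W_n^*-\lambda I)$ via the eigenvalue computation $W_n^*\kappa_{1+it}=e^{it\log n}\kappa_{1+it}$, and then invoke the preceding lemma. The paper's proof is exactly this, just stated more tersely without the recap of why the lemma holds.
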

\begin{proof}
    Note that if \( t \in \varphi_n^{-1}(A) \), then
    \[
    W_n^* \kappa_{1+it} = n^{it} \kappa_{1+it} = e^{it\log n} \kappa_{1+it}.
    \]
    Since \( \varphi_n(t) = e^{it\log n} \in A \), it follows that
    \[
    \left\{ \kappa_{1+it} \right\}_{t \in \varphi_n^{-1}(A)} \subset \bigcup_{\lambda \in A} \ker(W_n^* - \lambda I),
    \]
    and the assertion follows from the previous Lemma.
\end{proof}

\begin{theorem}
$W_n^{\ast}$ is a frequently hypercyclic operator for all $n\geq 2.$
\end{theorem}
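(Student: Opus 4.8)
The plan is to invoke Proposition~\ref{FreqEig}, so it suffices to check its two hypotheses for $T = W_n^*$ acting on $Y = H^2$: that $H^2$ is a complex separable infinite-dimensional Banach space of type $2$, and that $W_n^*$ admits a perfectly spanning set of $\mathbb{T}$-eigenvectors.

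The first hypothesis is immediate. The space $H^2$ is a complex separable Hilbert space (with orthonormal basis $(z^k)_{k\geq 0}$), hence infinite-dimensional, and every Hilbert space is of type $2$ by the result of \cite{LiQueffelec2017} quoted above.

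For the second hypothesis, I would take $\sigma = m$, the normalized Lebesgue measure on $\mathbb{T}$. It is a probability measure, and it is \emph{continuous} in the required sense, since $m(\{\lambda\}) = 0$ for every $\lambda \in \mathbb{T}$. It then remains to see that $W_n^*$ has an $m$-spanning set of $\mathbb{T}$-eigenvectors, i.e.\ that for every $m$-measurable $A \subset \mathbb{T}$ with $m(A) = 1$ the eigenspaces $\ker(W_n^* - \lambda I)$, $\lambda \in A$, span a dense subspace of $H^2$ — but this is exactly the content of Proposition~\ref{KerMeasure}. Combining these two facts, $W_n^*$ has a perfectly spanning set of $\mathbb{T}$-eigenvectors, and Proposition~\ref{FreqEig} yields that $W_n^*$ is frequently hypercyclic for every $n \geq 2$.

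I do not expect a genuine obstacle here: the substantive work has already been carried out in Proposition~\ref{KerMeasure}, whose proof reduces the density of $\bigcup_{\lambda\in A}\ker(W_n^*-\lambda I)$ to the fact that $\Lambda(f)$ is analytic and vanishes on a set with an accumulation point (guaranteed because any full-measure $A$ has no isolated points). The only points worth stating explicitly in the write-up are that $m$ is a continuous probability measure and that $H^2$ meets the structural requirements of Proposition~\ref{FreqEig}; everything else is a direct citation.
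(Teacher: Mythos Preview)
Your proposal is correct and matches the paper's own proof essentially verbatim: both invoke Proposition~\ref{FreqEig} after noting that $H^2$ is a type~$2$ Hilbert space, that normalized Lebesgue measure $m$ is continuous, and that Proposition~\ref{KerMeasure} supplies the $m$-spanning set of $\mathbb{T}$-eigenvectors. The only difference is that you spell out the structural hypotheses on $H^2$ slightly more explicitly than the paper does.
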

\begin{proof}
Since \( m \) is a continuous measure, Proposition \ref{KerMeasure} implies that \( W_n^{\ast} \) has a perfectly spanning set of \( \mathbb{T}\)-eigenvectors. Then, by Proposition \ref{FreqEig}, the assertion follows.
\end{proof}

\section{Riemann hypothesis and invariant subspaces related 
to the semigroup \texorpdfstring{$(W_k^*)_{k\geq2}$}{lg}}
\label{RHIS}

In \cite{Calderaro2024} it was shown that $W_n$ does not have 
finite-dimensional invariant subspaces for all $n\geq 2$. We 
provide a new proof of this result.

\begin{theorem}
Let $S$ be a non-trivial closed $W_n$-invariant subspace for 
some $n\geq 2$. Then $S$ is infinite-dimensional.
\end{theorem}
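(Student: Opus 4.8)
The plan is to derive the statement from the dynamical results established earlier in the paper rather than from the spectral identity $\sigma_p(W_n)=\emptyset$ of Theorem~\ref{EspMan} (which essentially \emph{is} the conclusion being reproved). The idea is that a finite-dimensional $W_n$-invariant subspace would produce, by duality, a proper closed $W_n^{*}$-invariant subspace of finite codimension, and a hypercyclic operator cannot have such a subspace, since it would descend to a hypercyclic operator on a nonzero finite-dimensional space.

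Concretely, I would argue by contradiction. Assume $S$ is non-trivial, closed and $W_n$-invariant, and suppose $\dim S = d < \infty$; since $S$ is non-trivial, $S \neq \{0\}$, so $d \geq 1$. Put $M := S^{\perp}$. A one-line computation shows $M$ is $W_n^{*}$-invariant: for $g \in M$ and $f \in S$ one has $\langle W_n^{*} g, f\rangle = \langle g, W_n f\rangle = 0$, because $W_n f \in S$. Moreover $M$ is closed, and since $S$ is closed we have $H^2/M \cong (S^{\perp})^{\perp} = S$, so $\dim(H^2/M) = d \geq 1$; in particular $M \neq H^2$. Let $q \colon H^2 \to H^2/M$ be the quotient map and let $\widetilde{W} \colon H^2/M \to H^2/M$ be the operator induced by $W_n^{*}$, so that $q \circ W_n^{*} = \widetilde{W} \circ q$. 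By Theorem~\ref{mixingthm}, $W_n^{*}$ is hypercyclic; fix a hypercyclic vector $x$. Then $\operatorname{Orb}(q(x), \widetilde{W}) = q(\operatorname{Orb}(x, W_n^{*}))$ is dense in $q(H^2) = H^2/M$ because $q$ is a continuous surjection, so $\widetilde{W}$ is a hypercyclic operator on $H^2/M$. But $H^2/M$ is a normed space of finite dimension $d \geq 1$, and no such space supports a hypercyclic operator (see \cite{GrossePeris}). This contradiction forces $d = \infty$, i.e.\ $S$ is infinite-dimensional.

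I do not anticipate a real obstacle: the argument is short and each ingredient is standard. The only points needing care are the verification that $W_n$-invariance of $S$ passes to $W_n^{*}$-invariance of $S^{\perp}$, the elementary fact that a continuous linear surjection intertwining two operators carries hypercyclic vectors to hypercyclic vectors, and the citation of the classical non-existence of hypercyclic operators on finite-dimensional spaces. One could alternatively observe directly that a nonzero finite-dimensional $W_n$-invariant subspace would contain an eigenvector of $W_n$, contradicting $\sigma_p(W_n) = \emptyset$; but since that identity already yields the statement in Theorem~\ref{EspMan}, the quotient argument above is the genuinely new proof, relying on the hypercyclicity of $W_n^{*}$ proved in Section~\ref{Mixingchaos}.
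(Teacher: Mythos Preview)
Your proof is correct and follows essentially the same route as the paper: pass from $W_n$-invariance of $S$ to $W_n^{*}$-invariance of $S^{\perp}$, and then use hypercyclicity of $W_n^{*}$ to rule out finite codimension of $S^{\perp}$. The only difference is presentational: the paper compresses your explicit quotient argument into a one-line citation of \cite[Page~15]{BayartMatheron}.
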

\begin{proof}
Since $W_nS\subset S$ if and only if $W_n^*S^\perp\subset 
S^\perp$, and $W_n^*$ is hypercyclic, $S=S^{\perp\perp}$ is 
infinite-dimensional; see \cite[p.~15]{BayartMatheron}.
\end{proof}

Let $\mathcal{N}=\mathrm{span}(h_k)_{k\geq 2}$. By 
Theorem~\ref{Noor1}, the RH is equivalent to 
$\overline{\mathcal{N}}=H^2$. We now give an equivalent 
formulation of the RH in terms of hypercyclic vectors for 
$W_k^*$.

\begin{theorem}\label{thm:RHhyp}
Let $k\geq 2$. The RH holds if and only if there exist a 
hypercyclic vector $f$ for $W_k^*$ and $N\in\mathbb{N}$ such 
that $W_k^{*n}f\in\overline{\mathcal{N}}$ for all $n\geq N$.
\end{theorem}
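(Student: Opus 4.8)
I would prove the two implications separately, using the equivalence "RH $\iff$ $\overline{\mathcal N}=H^2$" (Theorem 8 of \cite{Noor}, restated just above) as the bridge.

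\emph{($\Leftarrow$) Suppose such an $f$ and $N$ exist.} The key observation is that $W_k^*$ is hypercyclic (Theorem~\ref{mixingthm}), so for a hypercyclic vector $f$ the orbit $\{W_k^{*n}f : n\in\mathbb N\}$ is dense in $H^2$. But a tail of a dense set is still dense: $\{W_k^{*n}f : n\geq N\}$ is dense in $H^2$, because removing finitely many points from a dense subset of a space with no isolated points leaves it dense. Hence $\overline{\{W_k^{*n}f : n\geq N\}} = H^2$. Since by hypothesis every such $W_k^{*n}f$ lies in the \emph{closed} subspace $\overline{\mathcal N}$, we get $H^2 = \overline{\{W_k^{*n}f : n\geq N\}} \subseteq \overline{\mathcal N}$, so $\overline{\mathcal N}=H^2$, i.e.\ RH holds.

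\emph{($\Rightarrow$) Suppose RH holds, so $\overline{\mathcal N}=H^2$.} Then $\overline{\mathcal N}=H^2$ is trivially $W_k^*$-invariant, and any hypercyclic vector $f$ for $W_k^*$ (which exists by Theorem~\ref{mixingthm}) satisfies $W_k^{*n}f \in H^2 = \overline{\mathcal N}$ for all $n\geq N$, with $N=1$ (or $N=0$) working. So the forward direction is essentially immediate once RH is assumed, since the condition $W_k^{*n}f\in\overline{\mathcal N}$ becomes vacuous.

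\emph{Main obstacle.} The only slightly delicate point is the density-of-a-tail argument in ($\Leftarrow$): one must justify that $\{W_k^{*n}f : n\geq N\}$ is dense, not merely that $\{W_k^{*n}f : n\geq 0\}$ is. This follows because $H^2$ is a perfect space (no isolated points) so any dense set remains dense after deleting the finitely many vectors $f, W_k^*f,\dots,W_k^{*(N-1)}f$; alternatively, and more robustly, one invokes that for a hypercyclic operator $T$ on an infinite-dimensional space and any hypercyclic vector $f$, each $T^m f$ is again hypercyclic (the set of hypercyclic vectors is $T$-invariant), so in particular $T^N f$ is hypercyclic and hence $\overline{\operatorname{Orb}(T^N f, T)} = \overline{\{W_k^{*n}f : n\geq N\}} = H^2$. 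With that in hand the inclusion $H^2\subseteq\overline{\mathcal N}$ is immediate from the closedness of $\overline{\mathcal N}$. I would also remark that the theorem gives a genuinely new reformulation: it converts the analytic density statement of \cite{Noor} into a statement about a single orbit of the chaotic operator $W_k^*$ eventually entering the subspace $\overline{\mathcal N}$.
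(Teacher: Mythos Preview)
Your proof is correct and follows essentially the same approach as the paper: both directions use the equivalence RH $\iff \overline{\mathcal N}=H^2$, the forward direction is immediate from the existence of a hypercyclic vector, and the backward direction relies on the fact that the tail of a dense orbit remains dense. If anything, your treatment of the tail-density step is more careful than the paper's, which simply asserts that $(W_k^{*n}f)_{n\geq N}$ is dense without further justification.
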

\begin{proof}
If the RH holds, then $\overline{\mathcal{N}}=H^2$ and the 
condition is trivially satisfied. Conversely, suppose such 
$f$ and $N$ exist. Since $(W_k^{*n}f)_{n\geq 0}$ is dense 
in $H^2$, so is the tail $(W_k^{*n}f)_{n\geq N}$. As this 
tail lies in $\overline{\mathcal{N}}$, we get 
$\overline{\mathcal{N}}=H^2$, and the RH follows.
\end{proof}

There has been interest in identifying subspaces $X\subset H^2$ 
satisfying $\mathcal{N}^\perp\cap X=\{0\}$; see 
\cite{Calderaro2024, Noor}. We show this holds unconditionally 
for $X=HC(W_n^*)$, which is a dense $G_\delta$ subset of $H^2$ 
by Birkhoff's Transitivity Theorem \cite[Theorem~2.19]{GrossePeris}.

The following general result is the key tool.

\begin{proposition}\label{thm:HCperp}
Let $T\in\mathcal{L}(Y)$ be hypercyclic, $V$ a set 
$($subspace$)$ of hypercyclic vectors for $T$ (except for zero), 
and $M\subset Y$ a closed, proper, $T$-invariant subspace. 
Then $M\cap V=\varnothing$ $($resp.\ $M\cap V=\{0\})$.
\end{proposition}
\begin{proof}
If $f\in M\cap V$ is non-zero, then $\mathrm{Orb}(f,T)\subset 
M$, so $M=Y$, contradicting properness.
\end{proof}

Since $\mathcal{N}^\perp$ is a closed $W_n^*$-invariant 
subspace \cite{Noor}, Proposition~\ref{thm:HCperp} gives:

\begin{corollary}\label{cor:Nperp}
For all $n\geq 2$, $\mathcal{N}^\perp\cap HC(W_n^*)=\varnothing$.
\end{corollary}

Define $\mathcal{M}:=\mathrm{span}\{h_k-h_\ell: k,\ell\geq 2\}$. 
This subspace is proper, $W_n$-invariant, satisfies 
$\mathcal{M}\subset\mathcal{N}$ and $\dim\mathcal{M}^\perp\geq 1$, 
and it was shown in \cite{Manzur} that 
$\mathrm{RH}\Longleftrightarrow\dim\mathcal{M}^\perp=1$. 
The same argument yields:

\begin{corollary}
For all $n\geq 2$, $\mathcal{M}^\perp\cap HC(W_n^*)=\varnothing$.
\end{corollary}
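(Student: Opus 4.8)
The plan is to obtain this exactly as the proof of Corollary~\ref{Nperp}, namely as an immediate application of Theorem~\ref{HCperp} with $T = W_n^*$, the set of hypercyclic vectors $V = HC(W_n^*)$, and the closed subspace $M = \mathcal{M}^\perp$. So the whole task reduces to checking that $\mathcal{M}^\perp$ meets the three hypotheses of Theorem~\ref{HCperp}: closed, proper, and $W_n^*$-invariant.

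First I would note that $\mathcal{M}^\perp$ is closed, being an orthogonal complement. Next, $\mathcal{M}^\perp$ is $W_n^*$-invariant: the paragraph introducing $\mathcal{M}$ asserts that $\mathcal{M}$ is $W_n$-invariant, and the standard duality $W_n \mathcal{M} \subseteq \mathcal{M} \iff W_n^* \mathcal{M}^\perp \subseteq \mathcal{M}^\perp$ (used already in the proof of Theorem~\ref{HCperp}'s applications) gives the claim. Finally, $\mathcal{M}^\perp$ is proper, i.e. $\mathcal{M}^\perp \neq H^2$: since the functions $h_k$ are linearly independent, $h_k - h_\ell \neq 0$ for $k \neq \ell$, so $\mathcal{M}$ is a nonzero subspace and therefore $\overline{\mathcal{M}} \neq \{0\}$, which forces $\mathcal{M}^\perp \neq H^2$. (Alternatively one may simply invoke $\dim \mathcal{M}^\perp \geq 1$ together with the fact, stated in the preceding paragraph, that $\mathcal{M}$ is a proper subspace contained in $\mathcal{N}$.)

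With these verifications in hand, I would conclude: by Theorem~\ref{mixingthm} the operator $W_n^*$ is hypercyclic for every $n \geq 2$, and $HC(W_n^*)$ is precisely its set of hypercyclic vectors; applying Theorem~\ref{HCperp} to the proper, closed, $W_n^*$-invariant subspace $\mathcal{M}^\perp$ then yields $\mathcal{M}^\perp \cap HC(W_n^*) = \varnothing$. Unlike in Corollary~\ref{Nperp}, no case split on whether the perpendicular space is trivial is needed, since $\dim \mathcal{M}^\perp \geq 1$ is already known; and even in a degenerate situation the conclusion would survive because $0 \notin HC(W_n^*)$.

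I do not expect any genuine obstacle: the mathematical content sits entirely in Theorem~\ref{HCperp}, which is already established, and in the structural facts about $\mathcal{M}$ that are quoted from \cite{Manzur} in the surrounding discussion. The only mild point of care is to make explicit why $\mathcal{M}^\perp$ is a \emph{proper} subspace and why it is $W_n^*$-invariant, both of which follow at once from the hypotheses already recorded in the text.
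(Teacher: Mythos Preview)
Your proposal is correct and matches the paper's approach exactly: the paper simply says the corollary is established ``as in Corollary~\ref{Nperp}'', i.e., by applying Theorem~\ref{HCperp} to the closed, proper, $W_n^*$-invariant subspace $\mathcal{M}^\perp$. Your verification of properness via $\mathcal{M}\neq\{0\}$ is the right point to make explicit (note that the alternative you mention, $\dim\mathcal{M}^\perp\geq 1$, gives nontriviality rather than properness, so your primary argument is the one to keep).
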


Finally, by the Herrero--Bourdon Theorem 
\cite[Theorem~2.55]{GrossePeris}, for any hypercyclic vector 
$f$ for $T$ the subspace
\[
Z_f(T):=\mathrm{span}\,\mathrm{Orb}(f,T)=
\{p(T)f: p\text{ polynomial}\}
\]
consists of hypercyclic vectors except for zero. Applying 
Proposition~\ref{thm:HCperp} as in Corollary~\ref{cor:Nperp}:

\begin{corollary}
Let $f$ be a hypercyclic vector for $W_n^*$, $n\geq 2$. Then
\[
\mathcal{N}^\perp\cap Z_f(W_n^*)=\{0\}
\quad\text{and}\quad
\mathcal{M}^\perp\cap Z_f(W_n^*)=\{0\}.
\]
\end{corollary}



\bibliography{Bibliography}

@article{bayart2006frequently,
  title={Frequently hypercyclic operators},
  author={Bayart, Fr{\'e}d{\'e}ric and Grivaux, Sophie},
  journal={Transactions of the American Mathematical Society},
  volume={358},
  number={11},
  pages={5083--5117},
  year={2006}
}

@book {BayartMatheron,
    AUTHOR = {Bayart, Fr\'{e}d\'{e}ric and Matheron, \'{E}tienne},
     TITLE = {Dynamics of linear operators},
    SERIES = {Cambridge Tracts in Mathematics},
    VOLUME = {179},
 PUBLISHER = {Cambridge University Press, Cambridge},
      YEAR = {2009},
     PAGES = {xiv+337},
      ISBN = {978-0-521-51496-5},
   MRCLASS = {47-02 (11M06 37B05 47A16 47A35)},
  MRNUMBER = {2533318},
MRREVIEWER = {E. A. Gallardo-Guti\'{e}rrez},
       DOI = {10.1017/CBO9780511581113},
       URL = {https://doi.org/10.1017/CBO9780511581113},
}

@article {Bermudez2011,
    AUTHOR = {Berm\'{u}dez, T. and Bonilla, A. and Mart\'{\i}nez-Gim\'{e}nez, F. and
              Peris, A.},
     TITLE = {Li-{Y}orke and distributionally chaotic operators},
   JOURNAL = {J. Math. Anal. Appl.},
  FJOURNAL = {Journal of Mathematical Analysis and Applications},
    VOLUME = {373},
      YEAR = {2011},
    NUMBER = {1},
     PAGES = {83--93},
      ISSN = {0022-247X},
   MRCLASS = {47A16 (37B05)},
  MRNUMBER = {2684459},
MRREVIEWER = {Juan Pablo B\`es},
       DOI = {10.1016/j.jmaa.2010.06.011},
       URL = {https://doi.org/10.1016/j.jmaa.2010.06.011},
}

@article {Bernardes2015,
    AUTHOR = {Bernardes, Jr., N. C. and Bonilla, A. and M\"{u}ller, V. and
              Peris, A.},
     TITLE = {Li-{Y}orke chaos in linear dynamics},
   JOURNAL = {Ergodic Theory Dynam. Systems},
  FJOURNAL = {Ergodic Theory and Dynamical Systems},
    VOLUME = {35},
      YEAR = {2015},
    NUMBER = {6},
     PAGES = {1723--1745},
      ISSN = {0143-3857},
   MRCLASS = {47A16 (37D45 47B33)},
  MRNUMBER = {3377281},
MRREVIEWER = {Marina Murillo-Arcila},
       DOI = {10.1017/etds.2014.20},
       URL = {https://doi.org/10.1017/etds.2014.20},
}

@article {Bernardes2018,
    AUTHOR = {Bernardes, Jr., Nilson C. and Cirilo, Patricia R. and Darji,
              Udayan B. and Messaoudi, Ali and Pujals, Enrique R.},
     TITLE = {Expansivity and shadowing in linear dynamics},
   JOURNAL = {J. Math. Anal. Appl.},
  FJOURNAL = {Journal of Mathematical Analysis and Applications},
    VOLUME = {461},
      YEAR = {2018},
    NUMBER = {1},
     PAGES = {796--816},
      ISSN = {0022-247X},
   MRCLASS = {37C50},
  MRNUMBER = {3759568},
MRREVIEWER = {Sophie Grivaux},
       DOI = {10.1016/j.jmaa.2017.11.059},
       URL = {https://doi.org/10.1016/j.jmaa.2017.11.059},
}

@article{Calderaro2024,
  title={Orthogonality questions in the Hardy space related to {$\zeta $}-zeros},
  author={Calderaro, Francisco and Manzur, Juan and Noor, Waleed and Santos, Charles},
  journal={arXiv preprint arXiv:2203.05030},
  year={2024}
}

@article {DarjiPires21,
    AUTHOR = {Darji, Udayan B. and Pires, Benito},
     TITLE = {Chaos and frequent hypercyclicity for composition operators},
   JOURNAL = {Proc. Edinb. Math. Soc. (2)},
  FJOURNAL = {Proceedings of the Edinburgh Mathematical Society. Series II},
    VOLUME = {64},
      YEAR = {2021},
    NUMBER = {3},
     PAGES = {513--531},
      ISSN = {0013-0915},
   MRCLASS = {47A16 (37D45 47B33)},
  MRNUMBER = {4330274},
       DOI = {10.1017/S0013091521000286},
       URL = {https://doi.org/10.1017/S0013091521000286},
}

@article {Ghosh2024,
    AUTHOR = {Ghosh, Aditya and Kremnizer, Kobi and Noor, S. Waleed and
              Santos, Charles F.},
     TITLE = {Zero-free half-planes of the {$\zeta$}-function via spaces of
              analytic functions},
   JOURNAL = {Adv. Math.},
  FJOURNAL = {Advances in Mathematics},
    VOLUME = {455},
      YEAR = {2024},
     PAGES = {Paper No. 109872, 18},
      ISSN = {0001-8708},
   MRCLASS = {11M26 (30H10 42B30 42C30 46E10 46E22)},
  MRNUMBER = {4784381},
MRREVIEWER = {Tsz Ho Chan},
       DOI = {10.1016/j.aim.2024.109872},
       URL = {https://doi.org/10.1016/j.aim.2024.109872},
}

@book{GrossePeris,
  title={Linear chaos},
  author={Grosse-Erdmann, Karl-G and Manguillot, Alfred Peris},
  year={2011},
  publisher={Springer Science \& Business Media}
}

@book{Kreyszig,
  title={Introductory functional analysis with applications},
  author={Kreyszig, Erwin},
  year={1991},
  publisher={John Wiley \& Sons}
}

@book{LiQueffelec2017,
  title={Introduction to Banach Spaces: Analysis and Probability: Volume 1},
  author={Li, Daniel and Queff{\'e}lec, Herv{\'e}},
  volume={166},
  year={2017},
  publisher={Cambridge University Press}
}

@article{Manzur,
    AUTHOR = {Manzur, Juan and Noor, Waleed and Santos, Charles F.},
     TITLE = {A weighted composition semigroup related to three open
              problems},
   JOURNAL = {J. Math. Anal. Appl.},
  FJOURNAL = {Journal of Mathematical Analysis and Applications},
    VOLUME = {525},
      YEAR = {2023},
    NUMBER = {1},
     PAGES = {Paper No. 127261, 10},
      ISSN = {0022-247X},
   MRCLASS = {47B33 (11M26 47A15)},
  MRNUMBER = {4567499},
MRREVIEWER = {Arkady K. Kitover},
       DOI = {10.1016/j.jmaa.2023.127261},
       URL = {https://doi.org/10.1016/j.jmaa.2023.127261},
}

@article {Matache,
    AUTHOR = {Matache, Valentin},
     TITLE = {Weighted composition operators on {$H^2$} and applications},
   JOURNAL = {Complex Anal. Oper. Theory},
  FJOURNAL = {Complex Analysis and Operator Theory},
    VOLUME = {2},
      YEAR = {2008},
    NUMBER = {1},
     PAGES = {169--197},
      ISSN = {1661-8254},
   MRCLASS = {47B33 (30D55 46E15 47B38)},
  MRNUMBER = {2390678},
MRREVIEWER = {Pamela Gorkin},
       DOI = {10.1007/s11785-007-0025-y},
       URL = {https://doi.org/10.1007/s11785-007-0025-y},
}

@article {Menet2016chaotic,
    AUTHOR = {Menet, Quentin},
     TITLE = {Linear chaos and frequent hypercyclicity},
   JOURNAL = {Trans. Amer. Math. Soc.},
  FJOURNAL = {Transactions of the American Mathematical Society},
    VOLUME = {369},
      YEAR = {2017},
    NUMBER = {7},
     PAGES = {4977--4994},
      ISSN = {0002-9947},
   MRCLASS = {47A16 (37A25 47A35 47A75)},
  MRNUMBER = {3632557},
MRREVIEWER = {Ioannis R. Parissis},
       DOI = {10.1090/tran/6808},
       URL = {https://doi.org/10.1090/tran/6808},
}

@article {Noor,
    AUTHOR = {Waleed Noor, S.},
     TITLE = {A {H}ardy space analysis of the {B}\'{a}ez-{D}uarte criterion for
              the {RH}},
   JOURNAL = {Adv. Math.},
  FJOURNAL = {Advances in Mathematics},
    VOLUME = {350},
      YEAR = {2019},
     PAGES = {242--255},
      ISSN = {0001-8708},
   MRCLASS = {11M26},
  MRNUMBER = {3947645},
MRREVIEWER = {D. R. Heath-Brown},
       DOI = {10.1016/j.aim.2019.04.064},
       URL = {https://doi.org/10.1016/j.aim.2019.04.064},
}
\bibliographystyle{acm}

\end{document}